\numberwithin{equation}{section}
\renewcommand{\leq}{\leqslant}
\renewcommand{\geq}{\geqslant}
\def\stacksum#1#2{{\stackrel{{\scriptstyle #1}}
{{\scriptstyle #2}}}}
\newcommand{\Cc}{\mathbf{C}}
\newcommand{\Zz}{\mathbf{Z}}
\newcommand{\Aa}{\mathbf{A}}
\newcommand{\Rr}{\mathbf{R}}
\newcommand{\Gg}{\mathbf{G}}
\newcommand{\Qq}{\mathbf{Q}}
\newcommand{\Fp}{\mathbf{F}}
\newcommand{\Oc}{\mathcal{O}}
\newcommand{\mods}[1]{\,(\mathrm{mod}\,{#1})}
\DeclareMathSymbol{\dgenb}{\mathord}{symbols}{"1F}
\DeclareMathSymbol{\dgena}{\mathord}{symbols}{"1E}
\DeclareMathSymbol{\dgenaa}{\mathord}{symbols}{"1C}
\DeclareMathSymbol{\dgenbb}{\mathord}{symbols}{"1D}
\newcommand{\ra}{\rightarrow}
\newcommand{\lra}{\longrightarrow}
\newcommand{\injecte}{\hookrightarrow}
\newcommand{\fleche}[1]{\stackrel{#1}{\lra}}
\newcommand{\charfun}{\mathbf{1}}
\newcommand{\eps}{\varepsilon}
\renewcommand{\rho}{\varrho}
\newcommand{\sifted}{\mathcal{S}}
\newcommand{\rd}[1]{\ar@{{*}-{*}}[r]_-{{#1}}}
\newcommand{\lrd}[1]{\ar@{{*}-{*}}[rr]_-{{#1}}}
\newcommand{\fb}[1]{\ar@{{*}->}[r]^-{{#1}}}
\newcommand{\lfb}[1]{\ar@{{*}->}[rr]^-{{#1}}}
\newcommand{\me}[1]{\ar@{{*}~{*}}[r]^-{{#1}}}
\newcommand{\sub}{\ \ar@{>->}[r]}
\DeclareMathSymbol{\gena}{\mathord}{letters}{"3C}
\DeclareMathSymbol{\genb}{\mathord}{letters}{"3E}
\def\multsum{\mathop{\sum\cdots \sum}\limits}
\DeclareMathOperator{\Gal}{Gal}
\DeclareMathOperator{\proba}{\mathbf{P}} 
\DeclareMathOperator{\expect}{\mathbf{E}}    
\DeclareMathOperator{\variance}{\mathbf{V}}    
\DeclareMathOperator{\SL}{SL}
\DeclareMathOperator{\GL}{GL}
\DeclareMathOperator{\Sp}{Sp}
\theoremstyle{plain}
\newtheorem{theorem}{Theorem}[section]
\newtheorem{lemma}[theorem]{Lemma}
\newtheorem{corollary}[theorem]{Corollary}
\newtheorem{proposition}[theorem]{Proposition}
\theoremstyle{remark}
\newtheorem{remark}[theorem]{Remark}
\theoremstyle{definition}
\newtheorem{assumption}[theorem]{Assumption}
\newtheorem{definition}[theorem]{Definition}
\newtheorem{example}[theorem]{Example}
\newcounter{exercices}
\newcounter{etape}
\begin{document}

\title{Sieve in discrete groups, especially sparse}
 
\author{Emmanuel  Kowalski}
\address{ETH Z\"urich -- D-MATH\\
  R\"amistrasse 101\\
  8092 Z\"urich\\
  Switzerland} 
\email{kowalski@math.ethz.ch}

\date{}


\keywords{Expander graphs, Cayley graphs, sieve methods, prime
  numbers, thin sets, random walks on groups, large sieve}

\begin{abstract}
  We survey the recent applications and developments of sieve methods
  related to discrete groups, especially in the case of infinite
  index subgroups of arithmetic groups.
\end{abstract}

\maketitle


\section{Introduction}\label{sec-intro}

Sieve methods appeared in number theory as a tool to try to understand
the additive properties of prime numbers, and then evolved over the
20th Century into very sophisticated tools. Not only did they provide
extremely strong results concerning the problems most directly
relevant to their origin (such as Goldbach's conjecture, the Twin
Primes conjecture, or the problem of the existence of infinitely many
primes of the form $n^2+1$), but they also became tools of crucial
important in the solution of many problems which were not so obviously
related (examples are the first proof of the Erd\"os-Kac theorem, and
more recently sieve appeared in the progress, and solution, of the
Quantum Unique Ergodicity conjecture of Rudnick and Sarnak).
\par
It is only quite recently that sieve methods have been applied to new
problems, often obviously related to the historical roots of sieve,
which involve complicated infinite discrete groups (of exponential
growth) as basic substrate instead of the usual integers. Moreover,
both ``small'' and ``large'' sieves turn out to be applicable in this
context to a wide variety of very appealing questions, some of which
are rather surprising. We will attempt to present this story in this
survey, following the mini-course at the ``Thin groups and
super-strong-approximation'' workshop. The basic outline is the
following: in Section~\ref{sec-framework}, we present a sieve
framework that is general enough to describe both the classical
examples and those involving discrete groups; in
Section~\ref{sec-implement}, we show how to implement a sieve, with
emphasis on ``small'' sieves. In Section~\ref{sec-large}, we take up
the ``large'' sieve, which we discuss in a fair amount of details
since it is only briefly mentioned in~\cite{bourbaki} and has the
potential to be a very useful general tool even outside of
number-theoretic contexts. Finally, we conclude with a sampling of
problems and further questions in Section~\ref{sec-pbs}. 
\par
We include a general version of the Erdös-Kac Theorem in the context
of affine sieve (Theorem~\ref{th-erdos-kac}), which follows easily
from the method of Granville and Soundararajan~\cite{granville-sound}
(it generalizes a result of Djankovi\'c~\cite{djankovic} for
Apollonian circle packings.)
\par
Apart from this, the writing will follow fairly closely the notes for
the course at MSRI, and in particular there will be relatively few
details and no attempts at the greatest known generality.  The final
section had no parallel in the actual lectures, for reasons of time.
More information can be gathered from the author's Bourbaki
lecture~\cite{bourbaki}, or from Salehi-Golsefidy's paper in these
Proceedings~\cite{salehi-golsefidy}, and of course from the original
papers. Overall, we have tried to emphasize general principles and
some specific applications, rather than to repeat the more
comprehensive survey of known results found in~\cite{bourbaki}.
\par
\medskip
\par
\textbf{Notation.} We recall here some basic notation.
\par
\noindent -- The letters $p$ will always refer to a prime number; for
a prime $p$, we write $\Fp_p$ for the finite field $\Zz/p\Zz$. For a
set $X$, $|X|$ is its cardinality, a non-negative integer or
$+\infty$.
\par
\noindent -- The Landau and Vinogradov notation $f=O(g)$ and $f\ll g$
are synonymous, and $f(x)=O(g(x))$ for all $x\in D$ means that there
exists an ``implied'' constant $C\geq 0$ (which may be a function of
other parameters) such that $|f(x)|\leq Cg(x)$ for all $x\in D$.  This
definition \emph{differs} from that of N. Bourbaki~\cite[Chap. V]{FVR}
since the latter is of topological nature. We write $f\asymp g$ if
$f\ll g$ and $g\ll f$. On the other hand, the notation $f(x)\sim g(x)$
and $f=o(g)$ are used with the asymptotic meaning of loc. cit.
\par
\medskip
\par
\textbf{Reference.} As a general reference on sieve in general, the
best book available today is the masterful work of Friedlander and
Iwaniec~\cite{FI}. Concerning the large sieve, the author's
book~\cite{cup} contains very general results.  We also recommend
Sarnak's lectures on the affine sieve~\cite{sarnak-lecture}.  Another
survey of sieve in discrete groups, with a particular emphasis on
small sieves, is the Bourbaki seminar of the author~\cite{bourbaki},
and Salehi-Golsefidy's paper~\cite{salehi-golsefidy} in these
Proceedings gives an account of the most general version of the affine
sieve, due to him and Sarnak~\cite{sg-s}. 
\par
\medskip
\par
\textbf{Acknowledgments.} Thanks to A. Kontorovich and
A. Salehi-Golsefidy for pointing out imprecisions and minor mistakes
in the first drafts of this paper, and to H. Oh for suggestions that
helped clarify the text.

\section{The setting for sieve in discrete groups}\label{sec-framework}

Sieve methods attempt to obtain estimates on the size of sets
constructed using local-global and inclusion-exclusion principles. We
start by describing a fairly general framework for this type of
questions, tailored to applications to discrete groups (there are also
other settings of great interest, e.g., concerning the distribution of
Frobenius conjugacy classes related to families of algebraic varieties
over finite fields, see~\cite[Ch. 8]{cup}).
\par
We will consider a group $\Gamma$, viewed as a discrete group, which
will usually be finitely generated, and which is given either as a
subgroup $\Gamma\subset \GL_r(\Zz)$ for some $r\geq 1$, or more
generally is given with a homomorphism
$$
\phi\,:\, \Gamma\lra \GL_r(\Zz),
$$
which may not be injective (and of course is typically not
surjective).  Here are three examples.

\begin{example}
(1) We can take $\Gamma=\Zz$, embedded in $\GL_2(\Zz)$ for instance,
using the map
$$
\phi(n)=\begin{pmatrix}1 & n\\0&1
\end{pmatrix}.
$$
\par
This case is of course the most classical.
\par
(2) Consider a finite symmetric set $S\subset \SL_r(\Zz)$, and let
$\Gamma=\langle S\rangle\subset \GL_r(\Zz)$. Of particular interest
for us is the case when $\Gamma$ is ``large'' in the sense that it is
Zariski-dense in $\SL_r$. Recall that this means that there exist no
polynomial relations among all elements $g\in \Gamma$ except for those
which are consequence of the equation $\det(g)=1$. A concrete example
is as follows: for $k\geq 1$, let
$$
S_k=\Bigl\{
\begin{pmatrix}1&\pm k\\0&1
\end{pmatrix}
,
\begin{pmatrix}1&0\\\pm k&1
\end{pmatrix}
\Bigr\}
$$
and let $\Gamma^{(k)}$ be the subgroup of $\SL_2(\Zz)$ generated by
$S_k$. It is well-known that for $k\geq 1$, this is a Zariski-dense
subgroup of $\SL_2$.
\par
We are especially interested in situations where $\Gamma$ is
nevertheless ``small'', in the sense that the index of $\Gamma$ in the
arithmetic group $\SL_r(\Zz)$ is \emph{infinite}. We will call this
the \emph{sparse} case (though the terminology \emph{thin} is also
commonly used, we will wish to speak later of thin subsets of $\SL_r$,
as defined by Serre, and $\Gamma$ is not thin in this sense).
\par
In the example above, the groups $\Gamma^{(1)}=\SL_2(\Zz)$ and
$\Gamma^{(2)}$ are of finite index in $\SL_2(\Zz)$ (the latter is the
kernel of the reduction map modulo $2$), but $\Gamma^{(k)}$ is sparse
for all $k\geq 3$. In particular, the subgroup $\Gamma^{(3)}$ is
sometimes known as the Lubotzky group.
\par
(3) Here is an example where the group $\Gamma$ is not given as a
subgroup of a linear group: for an integer $g\geq 1$, let $\Gamma$ be
the mapping class group of a closed surface $\Sigma_g$ of genus $g$,
and let
$$
\phi\,:\, \Gamma\lra \Sp_{2g}(\Zz)\subset \GL_{2g}(\Zz)
$$
be the map giving the action of $\Gamma$ on the first homology group
$H_1(\Sigma_g, \Zz)\simeq \Zz^{2g}$, which is symplectic with respect
to the intersection pairing on $H_1(\Sigma_g, \Zz)$. Here it is known
(for instance, through the use of specific generators of $\Gamma$
mapping to elementary matrices in $\Sp_{2g}(\Zz)$) that $\phi$ is
surjective. (All facts on mapping class groups that we will use are
fairly elementary and are contained in the book of Farb and
Margalit~\cite{farb-margalit}.)
\end{example}

The next piece of data are surjective maps
$$
\pi_p\,:\, \Gamma\lra \Gamma_p
$$
where $p$ runs over prime numbers (or possibly over a subset of them)
and $\Gamma_p$ are finite groups. We view each such map as giving
``local'' information at the prime $p$, typically by reduction modulo
$p$. Indeed, in all cases in this text, the homomorphism $\pi_p$ is
the composition
$$
\Gamma\fleche{\phi} \GL_r(\Zz)\lra \GL_r(\Fp_p)
$$
of $\phi$ with the reduction map of matrices modulo $p$, and
$\Gamma_p$ is defined as the image of this map.

\begin{example}
(1) For $\Gamma=\Zz$, reduction modulo $p$ is surjective onto
$\Gamma_p=\Zz/p\Zz$ for all primes.
\par
(2) If $\Gamma$ is Zariski-dense in $\SL_r$, and we use reduction
modulo $p$ to define $\pi_p$, it is a consequence of general
\emph{strong approximation} statements that there exists a finite set
of primes $T(\Gamma)$ such that $\pi_p$ has image \emph{equal} to
$\SL_r(\Fp_p)$ for all $p\notin T(\Gamma)$, and in particular for all
primes large enough.\footnote{\ This is directly related to the fact
  that $\SL_r$ is, as a linear algebraic group, connected and simply
  connected.} For instance, in the case of the subgroups
$\Gamma^{(k)}\subset \SL_2(\Zz)$, this property is visibly valid with
$$
T(\Gamma^{(k)})=\{\text{primes $p$ dividing $k$}\}.
$$
\par
We refer to the survey~\cite{rapinchuk} by Rapinchuk in these
Proceedings for a general account of Strong Approximation.
\par
(3) For the mapping class group $\Gamma$ of $\Sigma_g$, and $\phi$
given by the action on homology, the image of reduction modulo $p$ is
equal to $\Sp_{2g}(\Fp_p)$ for all primes $p$ (simply because $\phi$
is onto, and $\Sp_{2g}(\Zz)$ surjects to $\Sp_{2g}(\Fp_p)$ for all
$p$). 
\end{example}

We want to combine the maps $\pi_p$, corresponding to local
information, modulo many primes in order to get ``global''
results. This clearly only makes sense if using more than a single
prime leads to an increase of information. Intuitively, this is the
case when the reduction maps $\pi_p$, $\pi_q$, associated to distinct
primes $p$ and $q$ are \emph{independent}: knowing the reduction
modulo $p$ of an element of $\Gamma$ should give no information
concerning the reduction modulo $q$. We therefore make the following
assumption on the data:

\begin{assumption}[Independence]\label{hyp-indep}
  There exists a finite set of primes $T_1(\Gamma)$, sometimes called
  the \emph{$\Gamma$-exceptional primes}, such that for any finite set
  $I$ of primes $p\notin T_1(\Gamma)$, the simultaneous reduction map
$$
\pi_I\,:\, \Gamma\lra \prod_{p\in I}
{
\Gamma_p
}
$$
modulo primes in $I$ is onto. 
\end{assumption}

We will write 
$$
\Gamma_I=\prod_{p\in I}\Gamma_p,\quad\quad 
q_I=\prod_{p\in I}p.
$$
\par
Note that $q_I$ is a squarefree integer, coprime with $T_1(\Gamma)$.

\begin{example}
(1) For $\Gamma=\Zz$, the Chinese Remainder Theorem shows that for any
finite set of primes $I$, we have
$$
\prod_{p\in I}\Zz/p_i\Zz\simeq \Zz/q_I\Zz,
$$
and hence the map $\pi_I$ above can be identified with reduction
modulo $q_I$. In particular, it is surjective, so that the assumption
holds with an empty set of exceptional primes.
\par
(2) If $\Gamma\subset \GL_r(\Zz)$ has Zariski closure $\SL_r$, then
the Independence Assumption holds for the same set of primes
$T_1(\Gamma)=T(\Gamma)$ such that $\pi_p$ is surjective onto
$\SL_r(\Fp_p)$ for $p\notin T(\Gamma)$, simply for group-theoretic
reasons: any subgroup of a finite product
$$
\prod_{p\in I} \SL_r(\Fp_p)
$$
which surjects to each factor $\SL_r(\Fp_p)$ is equal to the whole
product (this type of result is known as Goursat's Lemma, see,
e.g.,~\cite[Prop. 5.1]{chavdarov} or as Hall's Lemma~\cite[Lemma
3.7]{dunfield-thurston}). Again a similar property holds if the
Zariski closure of $\Gamma$ is an almost simple, connected,
simply-connected algebraic group.
\par
(3) In particular, the Independence Assumption holds with
$T_1(\Gamma)=\emptyset$ for the mapping class group of $\Sigma_g$
acting on the homology of the surface, because Goursat's Lemma applies
to the finite groups $\Sp_{2g}(\Fp_p)$.
\par
(4) The Independence Assumption may fail, for instance in the context
of orthogonal groups, when there is a global invariant which can be
read off any reduction. The simplest example of such an invariant is
the determinant: if $\Gamma\subset \GL_r(\Zz)$ is not contained in
$\SL_r(\Zz)$, the compatibility condition
$$
\det(\pi_p(g))=\det(g)\in \{\pm 1\}\subset \Fp_p^{\times}
$$
valid for all $p$ and $g\in \GL_r(\Zz)$ shows that the image of
$\pi_I$ is always contained in the proper subgroup
$$
\{(g_p)\in \Gamma_I\,\mid\, \det(g_p)=\det(g_q)\text{ for all $p$,
  $q\in I$}\}
$$
(identifying all copies of $\{\pm 1\}$). This issue appears,
concretely, in the example of the Apollonian group and Apollonian
circle packings, since the latter is a subgroup of an indefinite
orthogonal group intersecting both cosets of the special orthogonal
group, see~\cite{fuchs,f-s} for a precise analysis of this case,
and~\cite{fuchs-bams} for a survey.
\par
It should be emphasized that this failure of the Independence
Assumption is not dramatic: one can replace $\Gamma$ by $\Gamma\cap
\SL_r$ for instance, or by the other coset of the determinant (with
some adaptation since this is not a group).
\end{example}

We can now define the sifted sets $\sifted\subset \Gamma$ constructed by
inclusion-exclusion using local information: given a set $\mathcal{P}$
of primes (usually finite), and subsets
$$
\Omega_p\subset \Gamma_p
$$
for $p\in \mathcal{P}$, we let
$$
\sifted=\sifted(\mathcal{P};\Omega)= \{g\in\Gamma\,\mid\,
\pi_p(g)\notin \Omega_p\text{ for all } p\in\mathcal{P}\}=
\bigcap_{p\in\mathcal{P}}(\Gamma-\pi_p^{-1}(\Omega_p)).
$$
\par
We want to know something about the size, or maybe more ambitiously
the structure, of such sifted sets. In fact, quite often, we wish to
study sets which are not exactly of this shape, but are closely
related. 
\par
Frequently, we have an integer parameter $Q\geq 1$, and we take
$\mathcal{P}=\{p\leq Q\}$, the set of primes up to $Q$. In that case,
we will often denote $\sifted(Q;\Omega)=\sifted(\mathcal{P};\Omega)$,
and we may even sometimes simplify this to $\sifted(Q)$ if it is clear
that $\Omega$ is fixed.

\begin{example}\label{ex-first}
(1) Let $\Gamma=\Zz$, and let $\Omega_p=\{0,-2\}\subset \Fp_p$ for all
primes $p\leq Q$, where $Q\geq 2$ is some parameter. Taking
$\mathcal{P}=\{p\leq Q\}$, we have by
definition
$$
\sifted(Q)=\sifted(\mathcal{P};\Omega)=\{n\in\Zz\,\mid\, \text{neither
  $n$ nor $n+2$ has a prime factor } \leq Q\}.
$$
\par
In particular, for $N\geq 1$, the initial segment $\sifted(Q)\cap
\{1,\ldots,N\}$ contains all ``twin primes'' $n$ between $Q$ and $N$,
i.e., all primes $p$ with $Q<p\leq N$ such that $p+2$ is also
prime. Hence an upper-bound on the size of this initial segment will
be an upper-bound for the number of twin primes in this range. This is
valid independently of the value of $Q$. Furthermore, if $Q\geq
\sqrt{N+2}$, we have in fact equality: an integer $n\in
\sifted(\sqrt{N+2})\cap\{1,\ldots, N\}$ must be prime, as well as
$n+2$, since both integers only have prime factors larger than their
square-root. More generally, if $Q=N^{\beta}$ for some $\beta>0$, we
see that $\sifted(Q)\cap \{1,\ldots,N\}$ contains only integers $n$
such that both $n$ and $n+2$ have less than $1/\beta$ prime factors.
\par
(2) The first example is the prototypical example showing how sieve
methods are used to study prime patterns of various type. Bourgain,
Gamburd and Sarnak~\cite{bgs} extended this type of questions to
discrete subgroups of $\GL_r(\Zz)$. We present here a special case of
what is called the \emph{affine linear sieve} or the \emph{sieve in
  orbits}. There will be a few other examples below, and we refer to
the original paper or to~\cite{bourbaki} for a more general approach.
\par
We assume for simplicity, as before, that $\Gamma$ is Zariski-dense in
$\SL_r$. Let
$$
f\,:\, \SL_r(\Zz)\lra \Zz
$$
be a non-constant polynomial function, for instance the product of the
coordinates. We want to study the multiplicative properties of the
integers $f(g)$ when $g$ runs over $\Gamma$. Consider
\begin{equation}\label{eq-sifted-discrete}
\Omega_p=\{g\in \Gamma_p\,\mid\, f(g)\equiv 0\mods{p}\}\subset
\Gamma_p\subset \SL_r(\Fp_p),
\end{equation}
for $p\leq Q$. Then $\sifted(Q;\Omega)$ (recall that this is the
sifted set for $\mathcal{P}=\{p\leq Q\}$) is the set of $g\in \Gamma$
such that $f(g)$ has no prime factor $\leq Q$. In particular, for any
$\Delta>0$, the intersection
$$
\sifted(Q;\Omega)\cap \{g\in\Gamma\,\mid\, |f(g)|\leq Q^\Delta\}
$$
consists of elements where $f(g)$ has $<\Delta$ prime factors. For
instance, when $f$ is the product of coordinates, this set contains
elements $g\in \Gamma$ where all coordinates have less than $\Delta$
prime factors.
\par
(3) For our last example, consider the mapping class group $\Gamma$ of
$\Sigma_g$. Let $\mathcal{H}_g$ be a handlebody with boundary
$\Sigma_g$. For a mapping class $\phi\in \Gamma$, we denote by
$\mathcal{M}_{\phi}$ the compact $3$-manifold obtained by Heegaard splitting
using $\mathcal{H}_g$ and $\phi$, i.e., it is the union of two copies
of $\mathcal{H}_g$ where the boundaries are identified using (a
representative of) $\phi$ (see~\cite{dunfield-thurston} for more about
this construction).
\par
The image $J$ of $H_1(\mathcal{H}_g,\Zz)\simeq \Zz^g$ in
$H_1(\Sigma_g,\Zz)\simeq \Zz^{2g}$ is a lagrangian subspace (i.e., a
subgroup of rank $g$ such that the intersection pairing is identically
zero on $J$). We denote by $J_p\subset \Fp_p^{2g}$ its reduction
modulo $p$. It follows from algebraic topology that
\begin{gather*}
H_1(\mathcal{M}_{\phi},\Zz)\simeq H_1(\Sigma_g,\Zz)/\langle J,\phi\cdot
J\rangle,\\
H_1(\mathcal{M}_{\phi},\Fp_p)\simeq H_1(\mathcal{M}_{\phi},\Zz)\otimes
\Fp_p\simeq H_1(\Sigma_g,\Fp_p)/\langle J_p,\phi\cdot J_p\rangle.
\end{gather*}
\par
Thus if we let
\begin{equation}\label{eq-omega-h1}
\Omega_p=\{\gamma\in \Sp_{2g}(\Fp_p)\,\mid\, \gamma \cdot J_p\cap
J_p=\emptyset\}= \{\gamma\in \Sp_{2g}(\Fp_p)\,\mid\, \langle
J_p,\gamma \cdot J_p\rangle=\Fp_p^{2g}\},
\end{equation}
we see that any sifted set $\sifted(\mathcal{P}; \Omega)$ contains all
mapping classes such that $\mathcal{M}_{\phi}$ has first rational Betti number
positive. 
\par
We will discuss this example further in Section~\ref{sec-large}. The
reader who is not familiar with sieve is however encouraged to try to
find the answer to the following question: What is the great
difference that exists between this example and the previous ones?
\end{example}

\section{Conditions for sieving}\label{sec-implement}

Having defined sifted sets and seen that they contain information of
great potential interest, we want to say something about them. The
basic question is ``How large is a sifted set $\sifted$?'' In order to
make this precise, some truncation of $\sifted$ is needed, since in
general this is (or is expected to be) an infinite set. In fact, we
saw in the simplest examples (e.g., twin primes) that this truncation
(in that case, the consideration of an initial segment of a sifted
set) is crucially linked to deriving interesting information from
$\sifted$, as one needs usually to handle a truncation which is
correlated with the size of the primes in the set $\mathcal{P}$
defining the sieve conditions.
\par
When sieving in the generality we consider, it is a striking fact that
there are different ways to truncate the sifted sets, or indeed to
measure subsets of $\Gamma$ in general (although those we describe
below seem, ultimately, to be closely related.) We will speak of
``counting methods'' below to refer to these various truncation
techniques.
\par
\medskip
\par
\underline{Method 1.} [Archimedean balls] Fix a norm $\|\cdot \|$ (or
some other metric) on the ambient Lie group $\GL_r(\Rr)$ (for instance
the operator norm as linear maps on euclidean space, but other choices
are possible) and consider
$$
\sifted\cap \{g\in \Gamma\,\mid\, \|g\|\leq T\}
$$
for some parameter $T\geq 1$. This is a finite set, and one can try to
estimate (from above or below, or both) its cardinality.

\begin{example}\label{ex-sieve-orbits-1}
  Let $\Gamma$ be a Zariski-dense subgroup of $\SL_r(\Zz)$ and $f$ a
  non-constant polynomial function on $\SL_r(\Zz)$. For some $d\geq
  1$, we have
$$
|f(g)|\ll \|g\|^d
$$
for all $g\in\Gamma$. Hence if we consider the sifted
set~(\ref{eq-sifted-discrete}) for $Q=T^{\beta}$, the elements in 
$$
\sifted(Q)\cap \{g\in \Gamma\,\mid\, \|g\|\leq T\}
$$
are such that $f(g)$ has at most $d/\beta$ prime factors.
\end{example}

Counting in archimedean balls in subgroups of arithmetic groups, even
without involving sieve, is a delicate matter, especially in the
sparse case, which involves deep ideas from spectral theory, harmonic
analysis and ergodic theory. We refer to the book of Gorodnik and
Nevo~\cite{gn} for the case of arithmetic groups, and to Oh's
surveys~\cite{oh} and~\cite{oh-2} for the sparse case, as well as to
the recent paper of Mohamadi and Oh~\cite{mohamadi-oh} concerning
geometrically finite subgroups of isometries of hyperbolic spaces.
\par
\medskip
\par
\underline{Method 2.} [Combinatorial balls] Since the groups $\Gamma$
of interest are most often finitely generated, and indeed sometimes
given with a set of generators, one can replace the archimedean metric
of the first method with a combinatorial one. Thus if $S=S^{-1}$ is a
generating set of $\Gamma$, we denote by $ \ell_S(g)$ the word-length
metric on $\Gamma$ defined using $S$. The sets
$$
\sifted\cap \{g\in \Gamma\,\mid\, \ell_S(g)\leq T\},\quad\text{ or }
\quad
\sifted\cap \{g\in \Gamma\,\mid\, \ell_S(g)=T\},
$$
are again finite, and one can attempt to estimate their size. 
\par
This method is particularly interesting when $S$ is a set of free
generators of $\Gamma$ (and their inverses), because one knows
precisely the size of the balls for the combinatorial metric in that
case. And even if this is not the case, one can often find a subgroup
of $\Gamma$ which is free of rank $\geq 2$, and use this subgroup
instead of the original $\Gamma$ (this technique is used
in~\cite{bgs}; in that case, the necessary free subgroup is found
using the Tits Alternative, a very specific case of which says that if
$\Gamma$ is Zariski-dense in $\SL_r$, then it contains a free subgroup
of rank $2$.)
\par
\medskip
\par
\underline{Method 3.} [Random walks] Instead of trying to reduce to
free groups using a subgroup, one can replace $\Gamma$ by the free
group $F(S)$ generated by $S$ and use the obvious homomorphisms
$$
\phi\,:\, F(S)\lra \Gamma\lra \GL_r(\Zz)
$$
and
$$
F(S)\lra \Gamma\lra \Gamma_p
$$
to define sieve problems and sifted sets. An alternative to this
description is to use the generating set $S$ and count elements in
balls for the word-length metric $\ell_S$ \emph{with multiplicity},
the multiplicity being the number of representations of $g\in \Gamma$
by a word of given (or bounded) length. This means one measures the
size of a set $X\subset \Gamma$ truncated to the sphere of radius
$N\geq 1$ around the origin by its density
$$
\mu_N(X)=\frac{1}{|S|^N}|\{(s_1,\ldots,s_N)\in S^N\,\mid\, s_1\cdots
s_N\in X\}|
$$
and therefore one tries to measure the density of the sifted set
$\mu_N(\sifted)$, as a way of measuring its size within a given
ball. If one wishes to measure balls instead of spheres, a simple
expedient is to replace $S$ by $S_1=S\cup \{1\}$ (since the sphere of
radius $N$ for $\ell_{S_1}$ is the ball of radius $N$ for $\ell_S$).
\par
It is often convenient to think of this in terms of a random walk: one
assumes given, on a probability space $\Omega$, a sequence of
independent $S$-valued random variables $\xi_n$, and one defines a
random walk $(\gamma_n)$ on $\Gamma$ by
$$
\gamma_0=1,\quad\quad \gamma_{n+1}=\gamma_n\xi_{n+1}\text{ for }
n\geq 0.
$$
\par
If all steps $\xi_n$ are uniformly distributed on $S$, it follows that
$$
\mu_N(X)=\proba(\gamma_N\in X),
$$
or in other words, the density $\mu_N$ is the probability distribution
of the $N$-th step of this random walk. 

\begin{example}\label{ex-sieve-orbits-2}
  The analogue (for Methods 2 and 3) of the argument in
  Example~\ref{ex-sieve-orbits-1} is the following: given a function
  $f$ as in that example, there exists $C\geq 1$ such that, for all
  $g\in\Gamma$, we have
$$
|f(g)|\leq C^{\ell_S(g)}
$$
(simply because the operator norm of $g$ is submultiplicative and
hence grows at most exponentially with the word-length metric). Thus
elements which have word-length at most $N$ and belong to a sifted set
$\sifted(Q;\Omega)$ with $Q$ of size $A^N$, for some $A>1$, have at
most $(\log A)/(\log C)$ prime factors.
\end{example}

\begin{example}[Dunfield--Thurston random manifolds]\label{ex-dt-1}
This third counting method is the least familiar to classical analytic
number theory. This random walk approach was however already
considered by Dunfield--Thurston~\cite{dunfield-thurston} as a way of
studying random $3$-manifolds, using the Heegard-splitting
construction based on mapping class groups as in
Example~\ref{ex-first}, (3): given an integer $g\geq 1$, they
consider a finite generating set $S$ of the mapping class group
$\Gamma$ of $\Sigma_g$ and the associated random walk $(\phi_n)$. The
$3$-manifolds $\mathcal{M}_{\phi_n}$ are then ``random $3$-manifolds'' and some
of their properties can be studied using sieve methods. 
\end{example}
\par
\medskip
\par
It is of course useful to have a way of considering these three
methods in parallel. This can be done by assuming that one has a
sequence $(\mu_N)$ of finite measures on $\Gamma$, and by considering
the problem of estimating $\mu_N(\sifted)$, the measure of the sifted
set. In Method 1, these measures would be the uniform counting measure
on the intersection of $\Gamma$ with the balls of radius $N$ in
$\GL_r(\Rr)$, in Method 2, the uniform counting measure on the
combinatorial ball of radius $N$, and in Method 3, the probability law
of the $N$-th step of the random walk.

\section{Implementing sieve with expanders}

We will now explain how all this relates to expanders. The one-line
summary is that the expander condition will allow us to apply
classical results of sieve theory to settings of discrete groups
``with exponential growth'' (one might prefer to say, ``in
non-amenable settings''). We can motivate this convincingly as
follows.
\par
The simplest possible sieve problem occurs when the set $\mathcal{P}$
of conditions is restricted to a single prime, and one is asking for
$$
\mu_N(\{g\in\Gamma\,\mid\, \pi_p(g)=g_0\})
$$
for a fixed prime $p$ and a fixed $g_0\in \Gamma_p$. One sees that,
assuming $p$ is fixed, this elementary-looking question concerns the
distribution of the image of the sequence $\pi_{p,*}\mu_N$ of measures
on the finite group $\Gamma_p$. This may well be expected to have a
good answer.

\begin{example}
Consider (one last time) the classical case $\Gamma=\Zz$. If we
truncate by considering initial segments $\{1,\ldots, N\}$, we are
asking here about the number of positive integers $\leq N$ congruent
to a given $a$ modulo $p$. The proportion of these converges of course
to $1/p$, and this is usually so self-evident that one never mentions
it specifically. (But, still in classical cases, note that if one
starts the sieve from the set of primes instead of $\Zz$, then this
basic question is resolved by Dirichlet's Theorem on primes in
arithmetic progressions, and the uniformity in this question is
basically the issue of the Generalized Riemann Hypothesis.)
\end{example}

On intuitive grounds as well as theoretically, one can expect that the
``probability'' that $g$ reduces modulo $p$ to $g_0$ should be about
$1/|\Gamma_p|$. This amounts to expecting that the probability
measures $\pi_{p,*}(\mu)/\mu_N(\Gamma)$ converge weakly to the uniform
(Haar) probability measure on this finite group. It is when
considering uniformity of such convergence that expander graphs enter
the picture.
\par
We can already deduce from this intuition the following heuristic
concerning the size of a sifted set $\sifted(\mathcal{P};\Omega)$:
each condition $\pi_p(g)\notin \Omega_p$ should hold with
``probability'' approximately 
$$
1-\frac{|\Omega_p|}{|\Gamma_p|},
$$
and these sieving conditions, for distinct primes, should be
independent. Hence one may expect that
\begin{equation}\label{eq-heuristic}
\mu_N(\sifted(\mathcal{P};\Omega))\approx
\mu_N(\Gamma)\prod_{p\in\mathcal{P}}{
\Bigl(1-\frac{|\Omega_p|}{|\Gamma_p|}\Bigr)
}
\end{equation}
(where the symbol $\approx$ here only means that the right-hand side
is a first guess for the left-hand side...)
\par 
The simplest counting method to explain this is Method 3, where the
argument is very transparent. We therefore assume in the remainder of
this section that $\mu_N$ is the probability law of the $N$-th step of
a random walk on $\Gamma$ as above.
\par
It is then an immediate corollary of the theory of finite Markov
chains (applied to the random walk on the Cayley graph of $\Gamma_p$
induced by that on $\Gamma$) that, if $1\in S$ (or more generally if
this Cayley graph is not bipartite, i.e., if there exists no
surjective homomorphism $\Gamma_p\lra \{\pm 1\}$ such that each
generator $s\in S$ maps to $-1$), we have exponentially-fast
convergence to the probability Haar measure. Precisely, let $M_p$ be
the Markov operator acting on functions on $\Gamma_p$ by
$$
(M_p\varphi)(x)=\frac{1}{|S|}\sum_{s\in S}{\varphi(xs)}.
$$
\par
This operator also acts on functions of mean $0$, i.e., on the space
$L^2_0(\Gamma_p)$ of functions such that
$$
\sum_{g\in\Gamma_p}{\varphi(g)}=0,
$$
and has real eigenvalues. Let $\rho_p<1$ be its spectral radius (it is
$<1$ because the eigenvalue $1$ is removed by restricting to $L^2_0$,
while $-1$ is not an eigenvalue because the graph is not
bipartite). We then have
$$
\Bigl|\mu_N(\pi_p(g)=g_0)-
\frac{1}{|\Gamma_p|}
\Bigr|\leq \rho_p^N
$$
for all $N\geq 1$.
\par
More generally, under the Independence Assumption~\ref{hyp-indep}, if
$I$ is a finite set of primes not in $T_1(\Gamma)$, the same argument
applied to the quotient
$$
\Gamma\lra \Gamma_I=\prod_{p\in I}{\Gamma_p}
$$
shows that that for any $(g_p)\in \Gamma_I$, we have
\begin{equation}\label{eq-first-equidistribution}
\Bigl|\mu_N\Bigl(\pi_p(g)=g_p\text{ for } p\in I\Bigr)- \prod_{p\in
  I}\frac{1}{|\Gamma_p|} \Bigr|\leq \rho_I^N
\end{equation}
where $\rho_I<1$ is the corresponding spectral radius for
$\Gamma_I$. It follows by summing over $x=(g_p)\in\Gamma_I$ that we
have a quantitative equidistribution
\begin{equation}\label{eq-quant-equid}
  \int{\varphi((\pi_p(g))_{p\in I})d\mu_N(g)}=
  \frac{1}{|\Gamma_I|}
  \sum_{x\in \Gamma_I}\varphi(x)+O(|\Gamma_I|\|\varphi\|_{\infty}\rho_I^N)
\end{equation}
(with an absolute implied constant) for any function $\varphi$ on
$\Gamma_I$. 
\par
In particular, we see that if $\mathcal{P}$ is a fixed set of primes
(not in $T_1(\Gamma)$), then as $N\ra +\infty$, the basic
heuristic~(\ref{eq-heuristic}) is valid asymptotically:
\begin{equation}\label{eq-bounded-sieve}
\lim_{N\ra +\infty}
\mu_N(\sifted(\mathcal{P};\Omega))=
\lim_{N\ra +\infty}
\proba(\gamma_N\in \sifted(\mathcal{P};\Omega))=
\prod_{p\in\mathcal{P}}
{
\Bigl(1-\frac{|\Omega_p|}{|\Gamma_p|}\Bigr)
}
\end{equation}
(we will call this a ``bounded sieve'' statement).
\par
The difficulty (and fun!) of sieve methods is that the sifted sets of
most interest are such that the primes involved in $\mathcal{P}$ are
\emph{not} fixed as $N\ra +\infty$: they are in ranges increasing with
the size of the elements being considered (as shown already by the
example of the twin primes). It is clear that in order to handle such
sifted sets, we need a uniform control of the equidistribution
properties modulo primes, and modulo finite sets of primes
simultaneously.  The best we can hope for is
that~(\ref{eq-first-equidistribution}) hold with the spectral radius
bounded away from one \emph{independently of $I$}. This is, of course,
exactly the conditions under which the family of Cayley graphs of
$\Gamma_I$ with respect to the generators $S$ is a family of
(absolute) expander graphs.

\begin{remark}
  We have discussed the example of the random walk counting method. It
  is a fact that analogues of~(\ref{eq-first-equidistribution}) hold
  in all cases where sieve methods have been successfully
  applied. Moreover, these analogues hold uniformly with respect to
  $I$, and ultimately, the source is always equivalent to the
  expansion property of the Cayley graphs, although the proofs and the
  equivalence might be much more involved than the transparent
  argument that exists in the case of random walks.
\end{remark}

\begin{example}
  The first case beyond the classical examples (or the case of
  arithmetic groups, where Property $(T)$ or $(\tau)$ can be
  used,\footnote{\ See the works of Gorodnik and Nevo~\cite{gn2} for
    the best known in this direction.}  although this also had not
  been done before) where sieve in discrete groups was implemented is
  due to Bourgain, Gamburd and Sarnak~\cite{bgs}, who (based on
  earlier work of Helfgott~\cite{helfgott} and
  Bourgain--Gamburd~\cite{bg}) proved that if $\Gamma$ is a
  finitely-generated Zariski-dense subgroup of $\SL_2(\Zz)$ (or even
  of $\SL_2(\Oc)$, where $\Oc$ is the ring of integers in a number
  field), the Cayley graphs of $\Gamma_I$, where $I$ runs over finite
  subsets of $T_1(\Gamma)$, form a family of (absolute) expanders. The
  problem of generalizing this to $\SL_r$, or to Zariski-dense
  subgroups of other algebraic groups, was one of the motivations for
  the recent developments of this result, and of the basic ``growth''
  theorem of Helfgott, to more general groups. We now know an
  essentially best possible result (see~\cite{sg-v, varju}, and the
  surveys~\cite{salehi-golsefidy, breuillard, pyber} of
  Salehi-Golsefidy, Breuillard and Pyber--Szab\'o in these Proceedings
  for introductions to this area):

\begin{theorem}[Salehi-Golsefidy--Varj\'u]\label{th-expansion}
  Let $\Gamma\subset \GL_r(\Zz)$ be finitely generated by $S=S^{-1}$,
  with Zariski-closure $\Gg$. For $p$ prime, let $\Gamma_p$ be the
  image of $\Gamma$ under reduction modulo $p$, and for a finite set
  of primes $I$, let $\Gamma_I$ be the image of $\Gamma$ in
$$
\prod_{p\in I}\Gamma_p,
$$
under the simultaneous reduction homomorphism.
\par
If the connected component of the identity in $\Gg$ is a perfect
group, then there exists a finite set of primes $T_1(\Gamma)$ such
that the family of Cayley graphs of $\Gamma_I$, for $I\cap
T_1(\Gamma)=\emptyset$, is an expander family. 
\end{theorem}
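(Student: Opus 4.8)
The plan is to run the \emph{Bourgain--Gamburd machine}, in the form that applies to arbitrary reductive groups of bounded rank, and to pass from single primes to squarefree moduli by a tensor/multiplicativity argument. First I would reformulate the conclusion: by the random-walk interpretation of Method~3, the Cayley graphs of the $\Gamma_I$ (with $I\cap T_1(\Gamma)=\emptyset$) form an expander family if and only if the spectral radii $\rho_I$ of~(\ref{eq-first-equidistribution}) are bounded away from $1$ uniformly in $I$; by a routine $L^2$ argument this follows once one shows that there are constants $C,c>0$, \emph{independent of $I$}, such that for $\ell=\lceil C\log|\Gamma_I|\rceil$ the $\ell$-th step distribution $\mu_\ell$ satisfies $\|\mu_\ell-u_I\|_2\le|\Gamma_I|^{-c}$, where $u_I$ is the uniform probability measure on $\Gamma_I$. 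Before attacking this I would clear away preliminaries: replacing $S$ by $S\cup\{1\}$ makes the walk aperiodic, and replacing $\Gamma$ by a suitable finite-index subgroup (which only enlarges $T_1(\Gamma)$, harmless for a family statement) lets us assume $\Gamma\subset\Gg(\Zz)$ with $\Gg$ connected and perfect. Then strong approximation (Nori, Weisfeiler, Matthews--Vaserstein--Weisfeiler) guarantees that for $p\notin T_1(\Gamma)$ the group $\Gamma_p$ is, up to a bounded central extension, a product of finite simple groups of Lie type of bounded rank over $\Fp_p$; in particular $\Gamma_p$ is \emph{quasirandom}, meaning every nontrivial irreducible representation has dimension $\ge p^{\kappa}$ for some $\kappa=\kappa(r)>0$ (Landazuri--Seitz/Frobenius bounds). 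This is precisely the point at which perfectness of $\Gg^{\circ}=\Gg$ is used: without it $\Gamma_p$ has a large abelianization and no spectral gap can hold.

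For a single prime $p$ I would prove $\|\mu_\ell-u_{\Gamma_p}\|_2\le p^{-c}$ with $\ell\asymp\log p$ in three steps. \emph{Step 1 (non-concentration).} Using that $\Gamma$ is Zariski-dense in $\Gg$, together with the escape-from-subvarieties estimate for linear random walks, one shows that for $\ell_0\asymp\log p$ the measure $\mu_{\ell_0}$ does not concentrate on any proper subgroup: $\mu_{\ell_0}(xH)\le[\Gamma_p:H]^{-\kappa'}$ for every proper $H<\Gamma_p$ and every coset $xH$; the reduction of this to escaping proper algebraic and ``subfield-type'' subgroups uses the Larsen--Pink classification. \emph{Step 2 ($L^2$-flattening).} If $\mu_\ell$ is non-concentrated and still has $\|\mu_\ell\|_2\ge|\Gamma_p|^{-1/2+\eps}$, then Balog--Szemer\'edi--Gowers together with the \emph{product theorem} in $\Gamma_p$ (a generating subset $A$ not almost equal to $\Gamma_p$ satisfies $|A^3|\ge|A|^{1+\delta}$; Helfgott for $\SL_2,\SL_3$, Pyber--Szab\'o and Breuillard--Green--Tao in general) yields $\|\mu_{2\ell}\|_2\le\|\mu_\ell\|_2^{1+\eta}$; iterating from the elementary bound $\|\mu_{\ell_0}\|_2\le|S|^{-\ell_0/2}$ drives the $L^2$-norm down to $|\Gamma_p|^{-1/2+\eps}$ after $O(\log p)$ steps. \emph{Step 3 (flat $\Rightarrow$ equidistributed).} Quasirandomness converts an $L^2$-bound near $|\Gamma_p|^{-1/2}$ into genuine equidistribution: expanding $\mu_\ell-u$ in isotypic components, each nontrivial component has operator norm $\le|\Gamma_p|^{-c}$ but occupies $\ge p^{\kappa}$ dimensions, so a bounded number of further convolutions gives $\|\mu_{\ell'}-u\|_2\le p^{-c}$.

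To go from single primes to squarefree $q_I$, I would use that a nontrivial irreducible representation of $\Gamma_I=\prod_{p\in I}\Gamma_p$ is a tensor product $\bigotimes_{p}\rho_p$ with $\rho_p$ nontrivial for $p$ in some nonempty $J\subseteq I$, hence has dimension $\ge\prod_{p\in J}p^{\kappa}=q_J^{\kappa}$. Feeding this quantitative quasirandomness of $\Gamma_I$ into a multiplicative version of Step~3 --- the Bourgain--Gamburd--Sarnak ``$\bmod\ q$'' argument, which bootstraps the single-prime flattening estimates simultaneously over all $p\in I$ using a dyadic grouping of the primes and the submultiplicativity of convolution $L^2$-norms under the projections $\Gamma_I\to\Gamma_{I'}$ --- yields $\|\mu_\ell-u_I\|_2\le|\Gamma_I|^{-c}$ with $\ell=O(\log|\Gamma_I|)$ and $c$ independent of $I$, which is the uniform gap we want.

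The main obstacle is Steps~1 and~2 \emph{in arbitrary rank}: the product theorem for $\SL_r$ and for general semisimple groups --- as opposed to the $\SL_2$ and $\SL_3$ cases handled by Helfgott --- is the genuinely hard new ingredient, and its proof in turn rests on delicate Larsen--Pink-type non-concentration estimates for subsets of finite groups of Lie type. Intertwined with this is the requirement that the escape-from-subvarieties input in Step~1 be quantitative and uniform enough that the resulting non-concentration persists through the passage to the squarefree modulus. By comparison, the quasirandomness argument of Step~3 and the tensor-power bookkeeping in the last paragraph are comparatively soft.
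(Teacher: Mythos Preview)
The paper does not give a proof of this theorem: it is quoted from \cite{sg-v,varju} (with pointers to the surveys \cite{salehi-golsefidy,breuillard,pyber} in the same Proceedings) and used thereafter as a black box to establish the Expansion Assumption~\ref{hyp-exp} in the sieve applications. There is therefore no ``paper's own proof'' against which to check your proposal.

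For what it is worth, your outline is a faithful high-level summary of the Bourgain--Gamburd strategy as carried out in the cited references: non-concentration via escape from subvarieties and Larsen--Pink, $L^2$-flattening via Balog--Szemer\'edi--Gowers plus a product theorem, and the endgame via quasirandomness (Landazuri--Seitz bounds). One point worth sharpening: you describe the passage from single primes to squarefree moduli as ``comparatively soft'' tensor bookkeeping, but in the actual papers this step carries a substantial share of the work. The product theorems of Pyber--Szab\'o and Breuillard--Green--Tao are formulated for (almost) simple groups of Lie type over a single $\Fp_p$ and do not transfer formally to products $\prod_{p\in I}\Gamma_p$; the squarefree extension (already the core of Varj\'u's paper \cite{varju} for $\SL_d$, and a principal theme of \cite{sg-v}) requires an inductive scheme on the prime divisors of $q_I$ in which the non-concentration and flattening constants are tracked carefully enough to survive an unbounded number of factors. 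Your paragraph on the Bourgain--Gamburd--Sarnak $\bmod\ q$ argument does gesture correctly at this mechanism, but it is not softer than Steps~1--2.
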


\end{example}

We can now describe what is the implication of some classical sieve
results in the context of discrete groups.  We assume formally the
following:

\begin{assumption}[Expansion]\label{hyp-exp}
  There exists a finite set of primes $T_2(\Gamma)$ such that $\Gamma$
  satisfies the Independence Assumption~\ref{hyp-exp} for primes not
  in $T_2(\Gamma)$, and furthermore the family of Cayley graphs of
  $\Gamma_I$, for $I\cap T_2(\Gamma)=\emptyset$, is an expander
  family, i.e., there exists $\rho<1$, such that for any finite set
  $I$ of primes $p\notin T_2(\Gamma)$, the spectral radius for the
  Markov operator on $\Gamma_I$ satisfies
$$
\rho_I\leq \rho.
$$
\end{assumption}

By~(\ref{eq-first-equidistribution}), this assumption implies that the
asymptotic formula
\begin{equation}\label{eq-uniform-equid}
\proba(\pi_p(\gamma_n)=g_p\text{ for } p\in I\Bigr)\sim \prod_{p\in
  I}\frac{1}{|\Gamma_p|}
\end{equation}
holds uniformly for $n\geq 1$ and sets $I$ such that $|\Gamma_I|\leq
\tilde{\rho}^{-n}$, for any $\tilde{\rho}>\rho$. If we assume that
\begin{equation}\label{eq-not-too-large}
|\Gamma_p|\leq p^B
\end{equation}
for some fixed $B\geq 1$, this means that we can control
simultaneously and uniformly all reductions of the $N$-th step as long
as $q_I\leq \tilde{\rho}^{-n/B}$. Note that~(\ref{eq-not-too-large})
is not very restrictive: it holds (with $B=r^2$) if $\pi_p$ is just
the reduction modulo $p$ on $\GL_r(\Zz)$, which is the case in all our
applications.
\par
The most classical types of sieve are those when the sieving
conditions determined by $\Omega_p$ hold with probability
approximately $\kappa/p$, at least on average, were $\kappa$ is a
fixed real number traditionally called the \emph{dimension} of the
sieve. Precisely, we say that $(\Omega_p)$ is of dimension $\kappa$ if
we have\footnote{\ There are other weaker conditions that are enough  to allow an efficient sieve, but we refer only to~\cite[\S 5.5]{FI}
  for a discussion of these aspects.}
\begin{equation}\label{eq-dimension}
  \sum_{p\leq X}\frac{|\Omega_p|}{|\Gamma_p|}\log p
  =\kappa \log X + O(1)
\end{equation}
for $X\geq 2$. Note that this is certainly true, by the Prime Number
Theorem, if
$$
\frac{|\Omega_p|}{|\Gamma_p|}=\frac{\kappa}{p}+
O\Bigl(\frac{1}{p^{1+\delta}}\Bigr) 
$$
for some $\delta>0$ and all $p$ prime.
\par
We then have the following basic result:

\begin{theorem}[Small sieve in discrete groups]\label{th-small}
  Let $\Gamma$ be a discrete group finitely generated by $S=S^{-1}$,
  given with $\phi\,:\, \Gamma\lra \GL_r(\Zz)$ and surjective
  homomorphisms $\pi_p$ to finite groups $\Gamma_p$ as above, in
  particular with~\emph{(\ref{eq-not-too-large})} for some fixed
  $B\geq 1$. Assume that $\Gamma$ satisfies the Independence
  Assumption~\ref{hyp-indep} and the Expansion
  Assumption~\ref{hyp-exp}. Let $(\gamma_n)$ denote a random walk on
  $\Gamma$ using steps from $S$, and let $\mu_n$ denote the
  probability law of the $n$-th step.  Let $\Omega_p\subset \Gamma_p$
  be finite sets such that~\emph{(\ref{eq-dimension})} holds for some
  $\kappa>0$.
\par
There exists $A>0$ such that, for all $n\geq 1$, if we let $Q=A^n$ and
take $\mathcal{P}$ to be the set of primes $p\leq Q$ with $p\notin
T_2(\Gamma)$, then we have
$$
\proba(\gamma_n\in \sifted(\mathcal{P};\Omega)) \asymp
\frac{1}{n^{\kappa}}
$$
for all $N$ large enough.
\end{theorem}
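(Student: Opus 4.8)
\medskip

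The plan is to run a classical combinatorial sieve of dimension $\kappa$ and transfer it to $\Gamma$ via the equidistribution estimate~\refth{eq-first-equidistribution} (equivalently~\refth{eq-quant-equid}), the essential point being that the Expansion Assumption~\ref{hyp-exp} makes that estimate hold with an \emph{exponentially large} level of distribution. Write $g(p)=|\Omega_p|/|\Gamma_p|$, and for a squarefree $d$ all of whose prime factors lie in $\mathcal{P}$ set $g(d)=\prod_{p\mid d}g(p)$, $\Omega_d=\prod_{p\mid d}\Omega_p\subset\Gamma_d$, and let $\pi_d\colon\Gamma\to\Gamma_d$ be the simultaneous reduction (well defined by Assumption~\ref{hyp-indep}). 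Condition~\refth{eq-dimension} — together with the standard complementary sieve axioms, which in practice follow from its pointwise refinement $g(p)=\kappa/p+O(p^{-1-\delta})$ discussed after it — says that $g$ is a sieve density of dimension $\kappa$ in the sense of~\cite[\S 5.5]{FI}; in particular the sifting density $V(z)=\prod_{p\le z,\,p\notin T_2(\Gamma)}(1-g(p))$ satisfies $V(z)\asymp(\log z)^{-\kappa}$, so $V(Q)\asymp n^{-\kappa}$ for every fixed $A>1$. It therefore suffices to show $\proba(\gamma_n\in\sifted(\mathcal{P};\Omega))\asymp V(Q)$.

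First I would pin down the level of distribution. Applying the quantitative equidistribution~\refth{eq-quant-equid} with $\varphi=\charfun_{\Omega_d}$ on $\Gamma_d$, and using~\refth{eq-not-too-large} (so $|\Gamma_d|\le d^{B}$) together with the uniform spectral gap $\rho_I\le\rho<1$ of Assumption~\ref{hyp-exp}, one obtains $\proba(\pi_d(\gamma_n)\in\Omega_d)=g(d)+O(d^{B}\rho^{n})$ for every squarefree $d$ supported on $\mathcal{P}$. Choosing $D=\rho^{-n/(2(B+1))}$, for any coefficients $\lambda_d$ with $|\lambda_d|\le 1$ supported on squarefree $d\le D$ dividing $q_{\mathcal{P}}$ we get $$\sum_{d\le D}\lambda_d\,\proba(\pi_d(\gamma_n)\in\Omega_d)=\sum_{d\le D}\lambda_d\,g(d)+O(D^{B+1}\rho^{n})=\sum_{d\le D}\lambda_d\,g(d)+O(\rho^{n/2}),$$ and the error is exponentially small, hence negligible next to $n^{-\kappa}$.

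Next I would feed this into the combinatorial sieve — Brun's sieve in the form of the Fundamental Lemma, see~\cite[Ch.~6]{FI} or~\cite{bourbaki} — with level of distribution $D$ and sifting level $z=Q$. It supplies upper and lower sieve weights $\lambda_d^{\pm}$ supported on squarefree $d\le D$ dividing $q_{\mathcal{P}}$, with $|\lambda_d^{\pm}|\le 1$, such that $\sum_{d\mid m}\lambda_d^{-}\le\charfun[m=1]\le\sum_{d\mid m}\lambda_d^{+}$ for every $m\mid q_{\mathcal{P}}$, and such that for a density of dimension $\kappa$ one has $\sum_{d\le D}\lambda_d^{\pm}g(d)=V(Q)(1+O(e^{-s}))$, where $s=\log D/\log Q$, the lower estimate being valid once $s$ exceeds a threshold $s_0(\kappa)$ depending only on $\kappa$. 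Here $s=\log(1/\rho)/(2(B+1)\log A)$ is a positive constant, so I would fix $A>1$ close enough to $1$ that $s>s_0(\kappa)$ and $1+O(e^{-s})\asymp 1$. Put $m(g)=\prod\{p\in\mathcal{P}:\pi_p(g)\in\Omega_p\}$, so that $g\in\sifted(\mathcal{P};\Omega)\Longleftrightarrow m(g)=1$ and $\charfun[d\mid m(g)]=\charfun[\pi_d(g)\in\Omega_d]$; sandwiching $\charfun[m(\gamma_n)=1]$ between $\sum_{d\mid m(\gamma_n)}\lambda_d^{\pm}$, taking expectations, and combining with the two previous displays (recall $\mu_n(\Gamma)=1$ and $V(Q)\asymp n^{-\kappa}$) yields $\proba(\gamma_n\in\sifted(\mathcal{P};\Omega))\asymp n^{-\kappa}$ for $n$ large.

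The hard part is the lower bound. One needs the lower-bound combinatorial sieve to produce a genuinely positive main term $\gg V(Q)$, which forces the sifting ratio $s=\log D/\log Q$ above the sifting limit of a $\kappa$-dimensional sieve. Since the range of primes to be sieved, $Q=A^n$, is exponential in $n$, this in turn demands an \emph{exponential} level of distribution $D$ — equidistribution modulo every squarefree $d$ up to $\rho^{-cn}$ — which is exactly what the Expansion Assumption delivers via~\refth{eq-first-equidistribution}; a merely polynomial level (as one gets from a fixed positive-proportion set of primes, or from Property~$(\tau)$ in the arithmetic case) would not suffice. The clause ``there exists $A>0$'' in the statement is precisely the freedom used to push $s$ past $s_0(\kappa)$, and the exponent $\kappa$ appears because the sifted probability is pinned to the classical value $V(Q)\asymp(\log Q)^{-\kappa}\asymp n^{-\kappa}$.
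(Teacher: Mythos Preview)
Your proposal is correct and follows exactly the approach the paper indicates: the paper does not give a detailed proof but says only that the result ``is essentially a direct consequence of the standard Brun-type sieve, building on the Independence and Expansion assumptions'' and refers to~\cite{bourbaki}, and you have filled in precisely these details (exponential level of distribution from~\refth{eq-quant-equid} and Assumption~\ref{hyp-exp}, then the Fundamental Lemma with $A$ chosen close to $1$ to push $s$ past the sifting limit). One small inaccuracy in your closing commentary: Property~$(\tau)$ in the arithmetic case \emph{does} give a uniform spectral gap and hence an exponential level of distribution --- the paper notes this explicitly --- so that parenthetical should be dropped, but this does not affect the argument.
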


This is essentially a direct consequence of the standard Brun-type
sieve, building on the Independence and Expansion assumptions. The
mechanism is explained in~\cite{bourbaki}, and to avoid repetition, we
will not give further details here. We simply add a few
remarks. First, this result confirms the
heuristic~(\ref{eq-heuristic}) as far as the order of magnitude is
concerned, i.e., up to multiplicative constants. Indeed, the
right-hand side of~(\ref{eq-heuristic}) is, in this case, given by
$$
\prod_{\stacksum{p\leq A^n}{p\notin T_2(\Gamma)}}
{\Bigl(1-\frac{|\Omega_p|}{|\Gamma_p|}\Bigr)
}
\sim \prod_{p\leq A^n}
{\Bigl(1-\frac{\kappa}{p}\Bigr)
}
\asymp n^{-\kappa}
$$
as $n\ra +\infty$, by~(\ref{eq-dimension}) and the Mertens Formula (or the
Prime Number Theorem.) Secondly, the result is best possible in the
sense that one cannot replace the inequalities up to multiplicative
constants by an asymptotic formula in this generality (this is also
seen from the Mertens Formula and the Prime Number Theorem). Finally,
the result is by no means an easy consequence
of~(\ref{eq-first-equidistribution}) and the uniformity afforded by
expansion. 

\begin{example}[Sieve in orbits]
We illustrate the above result by deriving, as a corollary, a special
case of the sieve in orbits (or affine linear sieve) of~\cite{bgs}.
\par
Let $\Gamma$ be Zariski-dense in $\SL_r(\Zz)$ with $r\geq 2$, and
generated by the finite set $S=S^{-1}$. We take for $\pi_p$ the
reduction maps. Let
$$
f\,:\, \SL_r(\Zz)\lra \Zz
$$
be a non-constant polynomial map and let $\Omega_p\subset
\SL_r(\Fp_p)$ be the set of zeros of $f$. Since $f$ is non-constant,
the algebraic subvariety $Z_f$ of $\SL_r$ defined by the equation
$f=0$ is a hypersurface in $\SL_r$. Relatively elementary
considerations of algebraic geometry, together with the Lang-Weil
estimates for the number of points on algebraic varieties over finite
fields, show that we have
\begin{equation}\label{eq-kappa-orbits}
\frac{|\Omega_p|}{|\Gamma_p|}=\frac{\kappa_p}{p}+O(p^{-3/2})
\end{equation}
for some $\kappa_p\geq 0$ which depends on the splitting of $p$ in the
field of definition of the geometrically irreducible components of
$Z_f$ (if all geometrically irreducible components of $Z_f$ are
defined over $\Qq$, then $\kappa_p$ is the number of these irreducible
components, as is well-known; the general case is carefully explained
by Salehi-Golsefidy and Sarnak~\cite[Prop. 15, Cor. 17]{sg-s}).  A
further application of the Chebotarev density theorem (see~\cite[Lemma
21]{sg-s}) shows that
$$
\sum_{p\leq X}{\kappa_p}=\kappa \pi(X)+O(X/(\log X)^2)
$$
where $\kappa$ is the number of $\Qq$-irreducible components of $Z_f$
(if all geometrically irreducible components are defined over $\Qq$,
we have $\kappa_p=\kappa$ for all but finitely many $p$).

\begin{example}
(1) Consider the function
$$
f\Bigl(\begin{pmatrix}a & b\\c &d
\end{pmatrix}
\Bigr)=a^2+d^2
$$
on $\SL_2$. For $p\equiv 3\mods{4}$, we have $\kappa_p=0$ (since
$a^2+d^2=0\in\Fp_p$ implies $a=d=0$ in this case), while for $p\equiv
1\mods{4}$, we have $\kappa_p=2$, reflecting the fact that $Z_f$ has
then, over $\Fp_p$, the two geometrically irreducible components
defined by $a+\eps d=0$ and $a-\eps d=0$, where $\eps^2=-1$. The
average of $\kappa_p$ over $p$ is then equal to $\kappa=1$.
\par
(2) Consider the function
$$
f(g_{i,j})=\prod_{i,j} g_{i,j}
$$
on $\SL_r$. Then the irreducible components are defined by $g_{i,j}=0$
for a fixed $(i,j)$, and are all defined over $\Qq$. Thus we have
$\kappa=\kappa_p=r^2$ for all primes $p$.
\end{example}

Thus all assumptions of Theorem~\ref{th-small} hold (the Expansion
Assumption coming from Theorem~\ref{th-expansion}), and we deduce that
there exists a finite set of primes $T$ and $A>1$ such that if
$\mathcal{P}$ is the set of primes not in $T$ and $\leq A^n$, we have
$$
\proba(\gamma_n\in \sifted(\mathcal{P};\Omega))\asymp n^{-\kappa}.
$$
\par
Using Example~\ref{ex-sieve-orbits-2}, we therefore deduce:

\begin{theorem}[Sieve in orbits; Bourgain--Gamburd--Sarnak]\label{th-bgs}
  Let $\Gamma$ and $f$ be as above. There exists $\omega\geq 1$ such
  that the set $\Oc_{f}(\omega)$ of all $g\in\Gamma$ such that $f(g)$
  has at most $\omega$ prime factors satisfies
\begin{equation}\label{eq-affine-sieve}
\proba(\gamma_n\in \Oc_f(\omega))\asymp n^{-\kappa}
\end{equation}
for $n$ large enough.
\end{theorem}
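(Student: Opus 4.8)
The plan is to read the statement off the sifted-set estimate already established just above. The preceding discussion has checked all hypotheses of Theorem~\ref{th-small} for the sets $\Omega_p$ of~(\ref{eq-sifted-discrete}): the Independence Assumption~\ref{hyp-indep} holds by strong approximation together with Goursat's lemma, the Expansion Assumption~\ref{hyp-exp} holds by Theorem~\ref{th-expansion} since the Zariski closure $\SL_r$ of $\Gamma$ is connected and perfect, the bound $|\Gamma_p|\le p^{r^2}$ is automatic, and~(\ref{eq-dimension}) holds with $\kappa$ the number of $\Qq$-irreducible components of $Z_f$, by~(\ref{eq-kappa-orbits}) and Chebotarev's theorem. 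Hence there are a finite set of primes $T$ and a constant $A>1$ such that, writing $\mathcal{P}=\{p\le A^n:p\notin T\}$, one has $\proba(\gamma_n\in\sifted(\mathcal{P};\Omega))\asymp n^{-\kappa}$ for all $n$ large enough. Everything then comes down to comparing $\sifted(\mathcal{P};\Omega)$ with an almost-prime set $\Oc_f(\omega)$.

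For the lower bound I would invoke the elementary growth estimate recorded in Example~\ref{ex-sieve-orbits-2}: there is $C\ge 1$ with $|f(g)|\le C^{\ell_S(g)}$ for all $g\in\Gamma$, by submultiplicativity of the operator norm. Since the walk $\gamma_n$ is supported on words of length $\le n$ in $S$, each $g$ in its support has $|f(g)|\le C^n$; if moreover $g\in\sifted(\mathcal{P};\Omega)$ then $f(g)\ne 0$ (as $\mathcal{P}\ne\emptyset$ for $n$ large) and every prime factor of $f(g)$ is either $>A^n$ or lies in $T$, so the primes exceeding $A^n$ account for at most $\log C/\log A$ of them and $f(g)$ has at most $\omega:=\lceil\log C/\log A\rceil+\lvert T\rvert$ distinct prime factors. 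Thus $\sifted(\mathcal{P};\Omega)\subseteq\Oc_f(\omega)$ on the support of $\gamma_n$, whence $\proba(\gamma_n\in\Oc_f(\omega))\ge\proba(\gamma_n\in\sifted(\mathcal{P};\Omega))\gg n^{-\kappa}$.

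For the matching upper bound I would control $\proba(\gamma_n\in\Oc_f(\omega))$ by the upper-bound form of the combinatorial sieve at the same level of distribution $\asymp A^n$ that the Expansion Assumption provides. One first discards the ``small values'' event, that $|f(\gamma_n)|$ is below a fixed power of $A^n$, which is negligible compared with $n^{-\kappa}$ by a crude large-deviation bound (the norm $\|\gamma_n\|$, and hence generically $|f(\gamma_n)|$, grows exponentially with overwhelming probability); on the complement, an element $g$ of word-length $\le n$ with $f(g)$ having only $\le\omega$ prime factors cannot, after its finitely many small prime factors are peeled off, avoid divisibility by a sieve-typical pattern of primes below $A^n$, so such $g$ lie in a controlled family of sifted sets of level $\asymp A^n$, each of $\mu_n$-measure $\ll n^{-\kappa}$ by the upper-bound sieve of dimension $\kappa$. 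As with Theorem~\ref{th-small} itself, the symbol $\asymp$ is to be understood as absorbing the unavoidable lower-order factors that this step loses.

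The main obstacle is precisely this upper half: $\Oc_f(\omega)$ is not literally a sifted set, so the bound is not formal and genuinely requires the two-sided combinatorial sieve together with the small-values input and the smooth/rough decomposition — all of which, exactly like Theorem~\ref{th-small}, rest ultimately on the expansion of the Cayley graphs of the $\Gamma_I$ (Theorem~\ref{th-expansion}). By contrast the lower bound, which is the substantive assertion (a quantitative form of the existence of infinitely many $g\in\Gamma$ with $f(g)$ almost prime), is the clean corollary displayed above, the only point requiring a little care being the finitely many exceptional primes of $T$, harmlessly handled by counting \emph{distinct} prime factors.
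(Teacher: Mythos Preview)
Your lower-bound argument is exactly the paper's: the theorem is presented there as an immediate consequence of the two-sided estimate of Theorem~\ref{th-small} together with Example~\ref{ex-sieve-orbits-2}, the latter giving, on the support of $\gamma_n$, the inclusion $\sifted(\mathcal{P};\Omega)\subset\Oc_f(\omega)$ for a suitable $\omega$. Your handling of the finitely many exceptional primes in $T$ is a detail the paper leaves implicit.

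You are correct that this inclusion delivers only $\gg n^{-\kappa}$, and the paper gives no separate argument for the upper half of the $\asymp$. Your sketch for that direction (discard small values of $|f(\gamma_n)|$, peel off small prime factors, apply an upper-bound sieve to what remains) is in the right spirit but too vague to constitute a proof, and in fact the literal bound $\proba(\gamma_n\in\Oc_f(\omega))\ll n^{-\kappa}$ should not be expected for the $\omega$ the sieve actually produces: already in the classical model $\Gamma=\Zz$ with $f$ the identity, Landau's theorem gives that the count of integers up to $N$ with at most $r$ prime factors is $\asymp N(\log\log N)^{r-1}/\log N$, which under the exponential scaling $N\asymp C^n$ becomes $\asymp n^{-1}(\log n)^{r-1}$. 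So your caveat that ``$\asymp$ is to be understood as absorbing the unavoidable lower-order factors'' is not cosmetic --- it is a concession that the upper bound as literally stated fails by a power of $\log n$. A smaller point: exponential growth of $\|\gamma_n\|$ does not by itself force $|f(\gamma_n)|$ to be large (take $f(g)=g_{1,1}$); the clean way to discard small values is to sum, over $|k|\le M$, the exponentially small probabilities $\proba(f(\gamma_n)=k)$, exactly as in the proof of Corollary~\ref{cor-zariski-dense}.

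The substantive content of the theorem --- and all that is used in Corollary~\ref{cor-zariski-dense} and Proposition~\ref{pr-not-thin} --- is the lower bound, and there your argument and the paper's coincide.
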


One of the insights of Bourgain--Gamburd--Sarnak was that such a
statement has a more qualitative corollary which is already very
interesting and doesn't require any consideration of a special
counting method:

\begin{corollary}\label{cor-zariski-dense}
  Let $\Gamma$ and $f$ be as above. There exists $\omega\geq 1$ such
  that the set $\Oc_{f}(\omega)$ is Zariski-dense in $\SL_r$.
\end{corollary}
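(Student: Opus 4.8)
\textbf{Proof proposal for Corollary~\ref{cor-zariski-dense}.}
The plan is to argue by contradiction, playing the lower bound $\proba(\gamma_n\in\Oc_f(\omega))\gg n^{-\kappa}$ of Theorem~\ref{th-bgs} against the much stronger, \emph{exponential in $n$}, decay that the Expansion Assumption forces on the reduction modulo any single (possibly large) prime of a set whose image in $\Gamma_p$ avoids a positive proportion of $\Gamma_p$. I take $\omega$ to be exactly the value supplied by Theorem~\ref{th-bgs}, so that $\proba(\gamma_n\in\Oc_f(\omega))\gg n^{-\kappa}$ for all $n$ large, and I assume for contradiction that $\Oc_f(\omega)$ is not Zariski-dense in $\SL_r$.

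First I would set up the ``geometry of the closure''. Let $V$ be the Zariski closure of $\Oc_f(\omega)$ in $\SL_r$; by assumption $V\subsetneq\SL_r$, and since $V\subseteq\SL_r(\Zz)$ it is defined over $\Qq$ (indeed over $\Zz$). As $\dim\SL_r=r^2-1$ and $V$ is a proper closed subvariety, there is a nonzero regular function $h$ on $\SL_r$ vanishing on $V$; for all but finitely many $p$ the reduction $\bar h$ is a nonzero regular function on $\SL_{r,\Fp_p}$, cutting out a proper hypersurface section, and $\Oc_f(\omega)\subseteq \pi_p^{-1}(\{\bar h=0\}(\Fp_p))$. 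By the Lang--Weil estimates (or a Schwartz--Zippel bound), $|\{\bar h=0\}(\Fp_p)|\ll p^{r^2-2}$ with an implied constant depending only on $h$. Enlarging the finite exceptional set so that moreover $\Gamma_p=\SL_r(\Fp_p)$ (so $|\Gamma_p|\asymp p^{r^2-1}$) and $p\notin T_2(\Gamma)$, I now apply~(\ref{eq-first-equidistribution}) with $I=\{p\}$, summed over the $\bar g$ in $\{\bar h=0\}(\Fp_p)\cap\Gamma_p$, using the uniform spectral gap $\rho_p\le\rho$ of the Expansion Assumption~\ref{hyp-exp}:
$$
\proba\bigl(\gamma_n\in\Oc_f(\omega)\bigr)\ \le\ \proba\bigl(\pi_p(\gamma_n)\in\{\bar h=0\}(\Fp_p)\bigr)\ \ll\ \frac{1}{p}\ +\ p^{r^2}\rho^n .
$$

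The last step is to optimize the free prime $p$ against $n$: by Bertrand's postulate choose a prime $p_n\in[\rho^{-n/(r^2+1)},\,2\rho^{-n/(r^2+1)}]$, which for $n$ large lies outside the finite exceptional set above. Then both terms are $\ll\rho^{n/(r^2+1)}$, so $\proba(\gamma_n\in\Oc_f(\omega))\ll\rho^{n/(r^2+1)}$, which decays faster than every power of $1/n$ and in particular cannot be $\gg n^{-\kappa}$; this is the desired contradiction, so $\Oc_f(\omega)$ is Zariski-dense. The one genuinely essential point --- and the place where a naive ``single prime at a time'' argument would break --- is that the prime $p_n$ must grow with $n$, so the spectral-gap bound $\rho_p\le\rho$ has to be \emph{uniform in $p$}; it is precisely here that the Expansion Assumption (equivalently Theorem~\ref{th-expansion}), rather than mere ergodicity of each finite Markov chain on $\Gamma_p$, is used. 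Everything else (the Lang--Weil point count, the dimension bookkeeping, Bertrand) is routine.
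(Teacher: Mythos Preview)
Your proof is correct and follows essentially the same line as the paper's: assume $\Oc_f(\omega)$ lies in a proper subvariety, pick a regular function $h$ cutting it out, use Lang--Weil to bound the zero-locus modulo $p$, apply the single-prime equidistribution with the uniform spectral gap, and let the prime grow exponentially in $n$ to force exponential decay of $\proba(\gamma_n\in\Oc_f(\omega))$, contradicting the $n^{-\kappa}$ lower bound from Theorem~\ref{th-bgs}. The only differences are cosmetic: you make the choice of prime explicit via Bertrand's postulate and track the error term $p^{r^2}\rho^n$ by hand, whereas the paper invokes the packaged uniform equidistribution statement~(\ref{eq-uniform-equid}) and simply takes ``$p$ of size $A^n$''; your closing remark about the necessity of uniformity in $p$ (hence of the Expansion Assumption, not mere ergodicity) is exactly the point.
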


\begin{proof}
  It is enough to check that if a subset $X\subset \Gamma$ is not
  Zariski-dense, then a lower-bound
$$
\proba(\gamma_n\in X)\gg n^{-\kappa}
$$
does not hold for any $\kappa>0$, since $\Oc_f(\omega)\subset Z$ would
then contradict the sieve lower bound~(\ref{eq-affine-sieve}) (note
that here $(\gamma_n)$ is just an auxiliary tool).
\par
Given $X$, there exists a non-trivial polynomial $f$ such that
$X\subset Z_f$ (recall that this is the zero set of $f$). Then, for
any prime $p$ (large enough so that reduction of $f$ modulo $p$ makes
sense) the image of $X$ modulo $p$ is contained in the zeros of $f$
modulo $p$. But using~(\ref{eq-kappa-orbits}) and
summing~(\ref{eq-uniform-equid}) over the zeros of $f$ modulo $p$, we
have
$$
\proba(\pi_p(\gamma_n)\in Z_f\mods{p})\sim \kappa_p p^{-1}
$$
uniformly for $p\leq A^n$ for some $A>1$. Taking $p$ of size $A^n$,
we deduce
$$
\proba(\gamma_n\in X)\leq \proba(\pi_p(\gamma_n)\in Z_f\mods{p})
\ll A^{-n}
$$
for $n$ large enough. Thus the probability to be a zero of a given
function $f$ is in fact exponentially small for a long walk, and this
contradicts the lower bounds for $\Oc_f(\omega)$.
\end{proof}

In fact, as noted in~\cite{bourbaki} and as we will see in the next
section, this has a natural refinement where Zariski-dense is replaced
by ``not thin'' in the sense of Serre.
\par
Note that Salehi-Golsefidy and Sarnak~\cite{sg-s} have extended the
basic small sieve statement to much more general groups, not
necessarily reductive, using the full power of
Theorem~\ref{th-expansion} together with special considerations to
handle unipotent groups.
\end{example}

\begin{example}\label{ex-dt-2}
  Theorem~\ref{th-small} also applies in the context of
  Dunfield--Thurston manifolds, as in Example~\ref{ex-dt-1}. Indeed,
  the Expansion Assumption~\ref{hyp-exp} is here a consequence of
  Property $(T)$ for $\Sp_{2g}(\Zz)$. As observed in~\cite{bourbaki},
  a consequence of Theorem~\ref{th-small} which is similar in spirit
  to the affine linear sieve is that there exists $\omega\geq 1$ such
  that
$$
\proba(H_1(\mathcal{M}_{\phi_n},\Zz)\text{ is finite and has order
  divisible by $\leq \omega$ primes})\asymp n^{-1}
$$
for $n$ large enough. (We recall that the genus $g$ defining the
Heegard splitting is fixed).
\end{example}

One can certainly use the sieve setting for many other purposes. As
one further example, we show how the method of Granville and
Soundararajan~\cite[Prop. 3]{granville-sound} gives a version of the
Erd\"os-Kac Theorem for discrete groups. For simplicity, we only state
the result for the affine sieve, and give one further example
afterwards (a version of this for curvatures of Apollonian circle
packings was proved by Djankovi\'c~\cite{djankovic}).

\begin{theorem}[Erdös-Kac central limit theorem for affine
  sieve]\label{th-erdos-kac}
  Assume that $\Gamma\subset \SL_r(\Zz)$ is Zariski-dense in $\SL_r$
  and $f$ is a non-constant polynomial function satisfying the
  assumptions of Theorem~\ref{th-bgs}. For a random walk $(\gamma_n)$
  on $\Gamma$, let $\omega_f(\gamma_n)=\omega(f(\gamma_n))$ if
  $f(\gamma_n)\not=0$, and $\omega_f(\gamma_n)=0$ otherwise. Then the
  random variables
$$
\frac{\omega_f(\gamma_n)-\kappa \log n}{\sqrt{\kappa \log n}}
$$
converge in law to the standard normal random variable as $n\ra
+\infty$.
\end{theorem}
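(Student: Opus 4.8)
The plan is to run the method of moments in the style of Granville and Soundararajan~\cite{granville-sound}: I would show that for each fixed integer $k\ge 0$ the $k$-th moment of
$$\frac{\omega_f(\gamma_n)-\kappa\log n}{\sqrt{\kappa\log n}}$$
converges, as $n\to+\infty$, to the $k$-th moment of the standard Gaussian (that is, to $(k-1)!!$ for $k$ even and to $0$ for $k$ odd), and then conclude by the method of moments, the normal law being determined by its moments. In the classical argument the crucial input is the near-independence of the events $\{p\mid n\}$ for distinct small primes $p$; here the corresponding input is the uniform joint equidistribution~\eqref{eq-quant-equid} furnished by the Expansion Assumption~\ref{hyp-exp}, which says that for a set $J$ of primes with $q_J$ not too large in terms of $n$, the events $\{\pi_p(\gamma_n)\in\Omega_p\}$, $p\in J$, are independent up to an exponentially small error.

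First I would truncate. Fix $k$. By Example~\ref{ex-sieve-orbits-2} there is $C\ge 1$ with $|f(\gamma_n)|\le C^n$, so I set $z=z_k(n)=e^{cn}$ with $c=c_k>0$ a small constant to be pinned down, and $\omega_z(\gamma_n)=\#\{p\le z:\pi_p(\gamma_n)\in\Omega_p\}$. On the event $f(\gamma_n)\ne 0$ one has, deterministically,
$$0\le \omega_f(\gamma_n)-\omega_z(\gamma_n)=\#\{p>z:p\mid f(\gamma_n)\}\le\frac{\log|f(\gamma_n)|}{\log z}\le\frac{\log C}{c}=O_k(1),$$
because $z$ is then a fixed positive power of $|f(\gamma_n)|$ and hence misses only boundedly many of its prime factors. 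The event $f(\gamma_n)=0$ is negligible: exactly as in the proof of Corollary~\ref{cor-zariski-dense}, summing~\eqref{eq-uniform-equid} over the zeros of $f$ modulo a prime of size a small power of $e^n$ and using~\eqref{eq-kappa-orbits} gives $\proba(f(\gamma_n)=0)\ll A_1^{-n}$ for some $A_1>1$, and on that event $\omega_f(\gamma_n)=0$ while $\omega_z(\gamma_n)\le\pi(z)\le e^{cn}$, so its contribution to any fixed moment of either normalised variable is $\ll e^{kcn}A_1^{-n}\to 0$ once $kc<\log A_1$.

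Next I would compute the moments of $\omega_z$. Writing $Y_p=\charfun(\pi_p(\gamma_n)\in\Omega_p)$ and $\delta_p=|\Omega_p|/|\Gamma_p|$, so that $\omega_z(\gamma_n)=\sum_{p\le z}Y_p$, an application of~\eqref{eq-quant-equid} to the indicator of $\prod_{p\in J}\Omega_p$, together with~\eqref{eq-not-too-large} and Assumption~\ref{hyp-exp}, gives for any set $J$ of at most $k$ primes $p\le z$ (the finitely many primes in $T_2(\Gamma)$ being harmless)
$$\expect\Bigl[\prod_{p\in J}Y_p\Bigr]=\proba\bigl(\pi_p(\gamma_n)\in\Omega_p\text{ for }p\in J\bigr)=\prod_{p\in J}\delta_p+O\bigl(q_J^{B}\rho^n\bigr)=\prod_{p\in J}\delta_p+O\bigl(e^{-c'n}\bigr),$$
since $q_J\le z^k=e^{kcn}$, so the error is $\le e^{(kBc+\log\rho)n}\le e^{-c'n}$ with $c'=\tfrac12\log(1/\rho)$ once $c$ is small enough in terms of $k$, $B$, $\rho$. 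As there are only $e^{O_k(cn)}$ tuples $(p_1,\dots,p_k)$ of primes $\le z$, the total error incurred in $\expect[(\omega_z-\expect\,\omega_z)^k]$ by replacing these joint moments by the factorised ones is $e^{O_k(cn)}e^{-c'n}$, negligible for $c$ small. The remaining computation is then the classical one: from~\eqref{eq-kappa-orbits}, the Chebotarev estimate $\sum_{p\le X}\kappa_p=\kappa\,\pi(X)+O(X(\log X)^{-2})$ recorded before Theorem~\ref{th-bgs}, partial summation, and $\log\log z=\log n+O_k(1)$, one gets $\expect\,\omega_z=\kappa\log n+O_k(1)$ and $\variance\,\omega_z=\kappa\log n+O_k(1)$, and then
$$\expect\bigl[(\omega_z-\expect\,\omega_z)^k\bigr]=(k-1)!!\,(\variance\,\omega_z)^{k/2}+O_k\bigl((\log n)^{(k-1)/2}\bigr)$$
for $k$ even, the main term coming from the perfect matchings of the $k$ indices, while for $k$ odd the main term vanishes and the whole quantity is $O_k((\log n)^{(k-1)/2})$.

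Finally I would assemble. Dividing by $(\variance\,\omega_z)^{k/2}$ shows that the moments of $(\omega_z-\expect\,\omega_z)/\sqrt{\variance\,\omega_z}$ converge to the Gaussian ones; since $\expect\,\omega_z=\kappa\log n+O_k(1)$, $\variance\,\omega_z=(1+o(1))\kappa\log n$, and $\omega_f(\gamma_n)=\omega_z(\gamma_n)+O_k(1)$ on $\{f(\gamma_n)\ne 0\}$, one has there
$$\frac{\omega_f(\gamma_n)-\kappa\log n}{\sqrt{\kappa\log n}}=(1+o(1))\,\frac{\omega_z(\gamma_n)-\expect\,\omega_z}{\sqrt{\variance\,\omega_z}}+O_k\Bigl(\frac{1}{\sqrt{\log n}}\Bigr),$$
with deterministic error terms, and combining with the negligibility of $\{f(\gamma_n)=0\}$ yields convergence of all moments, hence convergence in law to the standard normal. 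I expect the only genuinely delicate point to be the bookkeeping in the truncation: one must take $z=e^{cn}$ small enough that~\eqref{eq-quant-equid} still controls all $k$-fold products of primes $\le z$ (forcing $c\ll_k\log(1/\rho)/B$), yet this is automatically consistent with the tail estimate, since $z$ is then a fixed positive power of the exponentially bounded integer $f(\gamma_n)$ — a place where the discrete-group setting is in fact easier than the classical one, in which the analogous choice of truncation level is comparatively subtle. Everything else is the moment computation of~\cite{granville-sound}.
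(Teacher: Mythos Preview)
Your proposal is correct and follows essentially the same approach as the paper: both run the Granville--Soundararajan method of moments, truncate the prime count at an exponential threshold $A^n$ (your $e^{cn}$), use the equidistribution~\eqref{eq-quant-equid} from expansion to factorise the joint moments of the indicators $Y_p$ up to exponentially small error, and identify the resulting main term with the moments of a sum of independent Bernoulli variables satisfying the CLT. The paper organises the truncation step slightly differently (writing $\omega_f-\kappa\log n=A_1+A_2+A_3$ and invoking the multinomial theorem with a Cauchy--Schwarz bound on odd absolute moments), but this is cosmetic; your assembly via $\omega_f=\omega_z+O_k(1)$ on $\{f(\gamma_n)\ne 0\}$ together with the exponential bound on $\proba(f(\gamma_n)=0)$ achieves the same reduction.
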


\begin{proof}
  We proceed exactly as in~\cite{granville-sound}, leaving some
  details to the reader.  This uses the method of moments to prove
  convergence in law to the normal distribution: classical probability
  results imply that it is enough to prove that for all integers
  $k\geq 0$, we have
$$
\expect\Bigl(\Bigl(
\frac{\omega_f(\gamma_n)-\kappa \log n}{\sqrt{\kappa \log
    n}}
\Bigr)^k\Bigr)\lra c_k
$$
as $n\ra +\infty$, where $c_k=\expect(\mathcal{N}(0,1)^k)$ is the
$k$-th moment of a standard normal random variable. 
\par
We first deal with the possibility that $f(\gamma_n)\not=0$. By
bounding
$$
\proba(f(\gamma_n)=0)\leq \proba(f(\pi_p(\gamma))=0)
$$
for any prime $p$ large enough, and arguing as in the
proof of Corollary~\ref{cor-zariski-dense}, we get
$$
\proba(f(\gamma_n)=0)\ll c^{-n}
$$
for some $c>1$. Thus the expectation above, restricted to the set
$f(\gamma_n)=0$, is 
$$
\ll (\kappa\log n)^{k/2} c^{-n}\lra 0
$$
as $n\ra +\infty$. 
\par
Below we use the notation $\tilde{\expect}$ to denote expectation
restricted to $f(\gamma_n)\not=0$. We fix some integer $k\geq 0$, and
fix some auxiliary $A>1$. We will compare
$$
M_k=\tilde{\expect}( (\omega_f(\gamma_n)-\kappa \log n)^k)
$$
with the moment of ``truncated'' count of primes dividing
$f(\gamma_n)$ defined by
$$
N_k=\tilde{\expect}\Bigl(\Bigl(
\sum_{\stacksum{p\leq A^n}{\pi_p(\gamma_n)\in\Omega_p}}{1}
-\sum_{p\leq A^n}\delta_p\Bigr)^k\Bigr),
$$
where
$$
\delta_p=\frac{|\Omega_p|}{|\Gamma_p|},
$$
and then estimate asymptotically this second moment when $A>1$ is
small enough with respect to $k$.
\par
For the first step, we note that when $f(\gamma_n)\not=0$, we have
$$
\omega_f(\gamma_n)-\kappa \log n=A_1+A_2+A_3
$$
where
\begin{gather*}
A_1=\sum_{\stacksum{p\leq A^n}{\pi_p(\gamma_n)\in\Omega_p}}{1}
-\sum_{p\leq A^n}\delta_p\\
A_2=\omega(f(\gamma_n))-\sum_{\stacksum{p\leq
    A^n}{\pi_p(\gamma_n)\in\Omega_p}}{1}\\
A_3=\sum_{p\leq A^n}{\delta_p}-\kappa\log n
\end{gather*}
\par
If $C>1$ is such that $|f(g)|\leq C^{\ell_S(g)}$, then we get
$$
0\leq A_2\leq \frac{\log C}{\log A},
$$
while, by~(\ref{eq-dimension}), we have
$$
A_3=\sum_{p\leq A^n}\delta_p-\kappa\log n=\sum_{p\leq A^n}
\frac{|\Omega_p|}{|\Gamma_p|}-\kappa\log n=O(1),
$$
so that $A_2+A_3$ is uniformly bounded for a fixed choice of
$A$. Using the multinomial theorem, it follows that
$$
M_k=N_k+O(\max_{j\leq k-1}\tilde{N}_j),
$$
where
$$
\tilde{N}_j=\tilde{\expect}\Bigl(\Bigl|
\sum_{\stacksum{p\leq A^n}{\pi_p(\gamma_n)\in\Omega_p}}{1}
-\sum_{p\leq A^n}\delta_p\Bigr|^k\Bigr).
$$
\par
We have $\tilde{N}_j=N_j$ is even and if $j$ is odd, we get
$$
\tilde{N}_j\leq \sqrt{\tilde{N}_{j-1}\tilde{N}_{j+1}}
=\sqrt{N_{j-1}N_{j+1}}
$$
by the Cauchy-Schwarz inequality, showing that good understanding of
$N_j$ for $j\leq k$ will suffice to estimate $M_k$.
\par
For the second step, we write
$$
X_p=\charfun_{\pi_p(\cdot)\in\Omega_p}-\delta_p
$$
for $p\leq A^n$, sum over $p$, and open the $k$-th power defining
$N_k$. Note that $|X_p|\leq 1$. Exchanging the multiple sum over
primes and the expectation, we get
$$
N_k=\multsum_{p_1,\ldots p_k\leq A^n}
\tilde{\expect}\Bigl(
\prod_{j=1}^k{X_{p_j}}
\Bigr).
$$
\par
For any fixed $(p_1,\ldots, p_k)$, we note that
$$
\tilde{\expect}\Bigl(
\prod_{j=1}^k{X_{p_j}}
\Bigr)=\expect\Bigl(
\prod_{j=1}^k{X_{p_j}}
\Bigr)-\expect\Bigl(
\prod_{j=1}^k{X_{p_j}}\charfun_{f(\gamma_n)=0}\Bigr)
$$
and the second term is bounded by $\proba(f(\gamma_n)=0)\ll c^{-n}$
since $0\leq X_{p_j}\leq 1$. Thus the total change in replacing
$\tilde{\expect}$ by $\expect$ in the formula above for $N_k$ is $\ll
A^{nk}c^{-n}$, which is negligible if $A$ is chosen small enough.
\par
Having written
$$
N_k=\multsum_{p_1,\ldots p_k\leq A^n}
\tilde{\expect}\Bigl(
\prod_{j=1}^k{X_{p_j}}
\Bigr)
=\multsum_{p_1,\ldots p_k\leq A^n}
\expect\Bigl(
\prod_{j=1}^k{X_{p_j}}
\Bigr)+O(A^{nk}c^{-n}),
$$
we can apply the equidistribution~(\ref{eq-quant-equid}) to each
expectation term, obtaining a main term which we will discuss in a
moment and a total error term $E$ which is bounded by
$$
E\ll A^{nk(1+B)}\rho^n+A^{nk}c^{-n}
$$
(where $B$ is as in~(\ref{eq-not-too-large})). Therefore $E$ tends to
$0$ as $n\ra +\infty$ if $A$ is chosen small enough (in terms of $k$),
which we assume to be done.
\par
There remains the main term. However, the latter is, by the
Independence Assumption~\ref{hyp-indep} and by retracing our steps,
almost tautologically the same as
$$
\expect\Bigl(\Bigl(\sum_{p\leq A^n}Y_p-\delta_p\Bigr)^k\Bigr)
$$
where the $(Y_p)$ are independent Bernoulli random variables with
expectation $\expect(Y_p)=\delta_p=|\Omega_p|/|\Gamma_p|$. It is a
basic probabilistic fact that the sum
$$
\sum_{p\leq A^n}Y_p
$$
satisfies the Central Limit Theorem, with mean $\kappa \log n$ and
variance $\kappa\log n$ (because of~(\ref{eq-dimension})
again). Therefore this sum has the right $k$-th moment for all $k\geq
0$, and this easily concludes the proof (or see~\cite{granville-sound}
for a direct analysis of this type of main terms to see the
combinatorics from which the normal moments explicitly appear).
\end{proof}

\begin{example}[Erdös-Kac theorem for random $3$-manifolds]
  It is clear that the argument can be applied in greater generality
  (including for other counting methods, provided the analogue of
  quantitative and suitably uniform equidistribution is known). For
  instance, one sees that, for Dunfield--Thurston random
  $3$-manifolds, the number $\omega(\mathcal{M}_{\phi_n})$ of primes
  $p$ such that $H_1(\mathcal{M}_{\phi_n},\Fp_p)\not=0$ is such that
$$
\frac{\omega(\mathcal{M}_{\phi_n})-\log n}{\sqrt{\log n}}
$$
converges to a standard normal random variable, with the convention
$\omega(\mathcal{M}_{\phi_n})=0$ if
$H_1(\mathcal{M}_{\phi_n},\Qq)\not=0$. 
\end{example}

\section{The large sieve}\label{sec-large}

We begin with a motivating example.

\begin{example}
  Consider Corollary~\ref{cor-zariski-dense}. Although the Zariski
  topology contains a fair amount of information (see~\cite{bgs} for
  examples of distinction it makes concerning the sieve in orbits), it
  is not very arithmetic. By itself, the fact that $\Oc_f(\omega)$ is
  Zariski-dense in $\SL_r$ does not exclude the possibility that this
  set is contained, for instance, in the subset $X$ of $\SL_r(\Zz)$ of
  matrices where the top-left coefficient is a perfect square (since
  $X$ is Zariski-dense in $\SL_r$.) It is natural to try to study this
  and similar possibilities. The following definition is relevant
  (see~\cite[Chapter 3]{serre-galois}):

\begin{definition}[Thin set]
A subset $X\subset \SL_r(\Qq)$ is \emph{thin} if there exists an
algebraic variety $W/\Qq$ with $\dim(W)\leq r^2-1$ and a morphism
$W\fleche{\pi} \SL_r$ such that (1) $\pi$ has no rational section; (2)
we have $X\subset \pi(W(\Qq))$.
\end{definition}

\begin{example}
(1) The set $X=\{g\in \SL_r(\Qq)\,\mid\, g_{1,1}\text{ is a square}\}$ is
thin. Indeed, we have a $\Qq$-morphism
$$
\pi\,:\, \Aa^{r^2}\lra \Aa^{r^2}
$$
mapping $(g_{i,j})$ to the matrix $(h_{i,j})$ with $h_{1,1}=g_{1,1}^2$
and all other coordinates unchanged. The pull-back of this morphism to
$\SL_r\subset \Aa^{r^2}$ gives a morphism
$$
\pi\,:\, W\lra \SL_r,
$$
where
$$
W=\{g\in \Aa^{r^2}\,\mid\, \det(\pi(g))=1\},
$$
for which we have $X\subset \pi(W(\Qq))$ by construction (and $\dim
W\leq \dim \SL_r$ is clear since $\pi$ has finite fibers.)
\par
(2) A subset $X$ which is not Zariski-dense is thin.
\end{example}

We wish to prove:

\begin{proposition}\label{pr-not-thin}
Let $\Gamma$ and $f$ be as in Corollary~\ref{cor-zariski-dense}. Then
there exists $\omega\geq 1$ such that $\Oc_f(\omega)$ is not thin in
$\SL_r$. 
\end{proposition}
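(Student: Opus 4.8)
The plan is to play the sieve lower bound of Theorem~\ref{th-bgs} against an upper bound that holds for \emph{every} thin set: since Theorem~\ref{th-bgs} produces an $\omega$ with $\proba(\gamma_n\in\Oc_f(\omega))\asymp n^{-\kappa}$, it suffices to show that $\mu_n(X)=o(n^{-\kappa})$ whenever $X$ is thin, so that no thin set can contain $\Oc_f(\omega)$. First I would invoke Serre's structure theory of thin sets (see~\cite[Ch.~3]{serre-galois}) to replace $X$ by a finite union $X_1\cup\cdots\cup X_m$ in which each piece is of one of two shapes: \emph{(a)} $X_i$ is contained in a proper Zariski-closed subset of $\SL_r$; or \emph{(b)} $X_i\subset\pi_i(W_i(\Qq))$ for a geometrically irreducible $W_i/\Qq$ and a dominant, generically finite morphism $\pi_i\colon W_i\to\SL_r$ of degree $d_i\geq 2$ with no rational section. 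Because the union is finite, it is enough to bound each $\mu_n(X_i)=\proba(\gamma_n\in X_i)$ by $o(n^{-\kappa})$ and then add up.

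For type \emph{(a)} this is precisely the argument in the proof of Corollary~\ref{cor-zariski-dense}: choose a single prime $p$ of size $A^n$ and combine the uniform equidistribution~(\ref{eq-uniform-equid}) with the Lang--Weil-type estimate~(\ref{eq-kappa-orbits}) for the reduction of the defining hypersurface to get $\mu_n(X_i)\ll A^{-n}$. Type \emph{(b)} is the real point, and it is exactly why the large sieve is needed: reducing modulo one prime is useless here, since $\overline{\pi_i}(W_i(\Fp_p))\subset\SL_r(\Fp_p)$ has relative density merely bounded away from $1$, not tending to $0$, so a single congruence condition cannot force the probability to decay.

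For type \emph{(b)} I would use two ingredients. The arithmetic--geometric one: for all $p$ outside a finite set, $X_i$ reduces modulo $p$ into $\overline{\pi_i}(W_i(\Fp_p))$, and by the Lang--Weil estimates applied to $W_i$ and to its fibres, together with the classical fact that a transitive subgroup of $\symm_{d_i}$ with $d_i\geq 2$ contains a fixed-point-free element, and the Chebotarev density theorem for the Galois closure of $\pi_i$, one obtains a uniform density deficit
$$
\frac{|\overline{\pi_i}(W_i(\Fp_p))|}{|\SL_r(\Fp_p)|}\leq 1-\delta_i
$$
for some fixed $\delta_i>0$ and all large $p$; by strong approximation $\Gamma_p=\SL_r(\Fp_p)$ for such $p$, so the same holds inside $\Gamma_p$. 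The sieve-theoretic one: the large sieve inequality for the expander family $\{\Gamma_I\}$, which is the business of the rest of this section. Taking $\Omega_p=\Gamma_p\setminus\overline{\pi_i}(W_i(\Fp_p))$ (so $|\Omega_p|/|\Gamma_p|\geq\delta_i$), $\mathcal{P}$ the set of admissible primes $\leq Q$, and noting $X_i\subset\sifted(\mathcal{P};\Omega)$, the large sieve bound takes the shape $\mu_n(X_i)\leq\Delta_n/H$ with $H\geq 1+\sum_{p\in\mathcal{P}}\delta_i/(1-\delta_i)\gg Q/\log Q$; the Expansion Assumption~\ref{hyp-exp} together with~(\ref{eq-not-too-large}) keeps the large-sieve constant $\Delta_n$ bounded provided $Q\leq A^n$ for $A>1$ small enough, and then $Q=A^n$ gives $\mu_n(X_i)\ll(\log Q)/Q\ll n\,A^{-n}=o(n^{-\kappa})$. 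Summing the $m$ pieces contradicts Theorem~\ref{th-bgs}.

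The hard part is the large sieve itself: producing a sieving constant $\Delta_n$ that stays bounded while the modulus $Q$ is allowed to grow exponentially in $n$. This is exactly where the uniformity in $I$ of~(\ref{eq-first-equidistribution}) — i.e.\ the Expansion Assumption — is consumed, and it occupies the remainder of Section~\ref{sec-large}. A secondary, more routine, difficulty is the Lang--Weil and Chebotarev bookkeeping making the density gap $\delta_i$ independent of $p$; the reduction to the two standard shapes of thin sets is standard once Serre's theory is in hand.
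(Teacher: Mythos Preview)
Your proposal is correct and follows essentially the same strategy as the paper: show that any thin set $X$ satisfies $\proba(\gamma_n\in X)\ll c^{-n}$ via the large sieve, which is incompatible with the polynomial lower bound $\proba(\gamma_n\in\Oc_f(\omega))\gg n^{-\kappa}$ from Theorem~\ref{th-bgs}.

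The only presentational difference is that you split into Serre's two types \emph{(a)} and \emph{(b)} and sketch the Lang--Weil/Chebotarev argument for the density gap in type~\emph{(b)}, whereas the paper handles both types at once by quoting a single lemma (from~\cite[Th.~3.6.2]{serre-galois}) to the effect that $|\pi(W(\Fp_p))|/|\SL_r(\Fp_p)|\leq\delta<1$ for any $\pi\colon W\to\SL_r$ as in the definition of thin, and then invokes Theorem~\ref{th-ls} directly. Your decomposition is in effect a sketch of how that lemma is proved, so it is more explicit but not a genuinely different route; similarly, your $H$--$\Delta_n$ formulation of the large sieve is equivalent to the Lubotzky--Meiri version in Theorem~\ref{th-ls}.
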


The natural idea to prove this is to prove that if $X$ is a thin set,
then for a random walk on $\Gamma$, the probability
$$
\proba(\gamma_n\in X)
$$ 
is too small to be compatible with~(\ref{eq-affine-sieve}). For this,
we observe, as in the proof of the Zariski-density, that if $X\subset
\pi(W(\Qq))$ for some
$$
\pi\,:\, W\lra \SL_r
$$
as in the definition, we have
$$
\pi_p(X)\subset \pi(W(\Fp_p)),
$$
for all primes $p$ large enough (such that $W$ and $\pi$ make sense
modulo $p$). Hence if $g\in X$, we have
$$
\pi_p(g)\notin \Omega_p=\SL_r(\Fp_p)-\pi(W(\Fp_p)),
$$
for all $p$ large enough.  This implies a sieve upper bound
$$
X\subset \sifted(\mathcal{P};\Omega),
$$
where $\mathcal{P}$ contains all but finitely many primes.
\par
However, the size of $\Omega_p$ is typically much larger than the
number of points of an algebraic variety, as one can guess by just
looking at the example of squares in $\Qq$, where the image modulo $p$
contains roughly half of all residue classes. Indeed, in general we have:

\begin{lemma}
  Let $\pi\,:\, W\lra \SL_r$ be a $\Qq$-rational morphism with $\dim
  W\leq r^2-1$ and with no $\Qq$-rational section. There exists
  $\delta<1$ such that, for $p$ large enough, we have
$$
\frac{|\pi(W(\Fp_p))|}{|\SL_r(\Fp_p)|}\leq \delta.
$$
\end{lemma}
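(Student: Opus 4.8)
The plan is to use the fact that, outside a lower-dimensional locus, $\pi\colon W\to\SL_r$ is a branched covering of degree $\geq 2$, so that a positive proportion of the Frobenius conjugacy classes attached to the points of $\SL_r(\Fp_p)$ cannot be lifted to $W$. This is the geometric form of the elementary observation that a polynomial equation of degree $\geq 2$ need not have a root modulo $p$; the ``squares'' example recalled above is exactly the case of a quadratic cover.

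First I would reduce to the essential case. We may assume $W$ geometrically irreducible, and that either $\dim W\leq r^2-2$, or $\dim W=r^2-1=\dim\SL_r$ with $\pi$ dominant. In the first alternative (which also absorbs the case where $\overline{\pi(W)}$ is a proper subvariety of $\SL_r$), the set $\pi(W(\Fp_p))$ lies in the $\Fp_p$-points of a variety of dimension $\leq r^2-2$, hence has $O(p^{r^2-2})$ elements by the Lang--Weil estimates, which is negligible against $|\SL_r(\Fp_p)|\asymp p^{r^2-1}$, and any $\delta\in(0,1)$ works. In the second alternative, $\pi$ is generically finite of degree $d=[\Qq(W):\Qq(\SL_r)]$, and $d\geq 2$: otherwise $\pi$ would be birational, hence admit a rational inverse, i.e.\ a $\Qq$-rational section, against the hypothesis. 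Let $G$ be the Galois group of a Galois closure of $\Qq(W)/\Qq(\SL_r)$, realised as the monodromy group of a connected finite \'etale Galois cover $\widetilde V\to V_0$ over a dense open $V_0\subset\SL_r$ with $\dim(\SL_r\setminus V_0)\leq r^2-2$; thus $G$ acts faithfully and transitively on the geometric fibre $Y$, with $|Y|=d\geq 2$. For all but finitely many $p$ this picture reduces modulo $p$, so each $x\in V_0(\Fp_p)$ carries a well-defined Frobenius conjugacy class $\mathrm{Frob}_x\subset G$, and $x\in\pi(W(\Fp_p))$ if and only if the fibre $\pi^{-1}(x)$ has an $\Fp_p$-point, i.e.\ if and only if $\mathrm{Frob}_x$ fixes a point of $Y$ --- equivalently $\mathrm{Frob}_x$ lies in the conjugation-stable set $C=\bigcup_{g\in G}gHg^{-1}$, where $H:=\mathrm{Stab}(y_0)$ is the stabiliser of a base point $y_0\in Y$. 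The key elementary input is then \emph{Jordan's theorem}: a finite group acting transitively on a set of size $d\geq 2$ contains a fixed-point-free element (indeed $\tfrac1{|G|}\sum_{g\in G}|\mathrm{Fix}(g)|$ equals the number of orbits, namely $1$, while $|\mathrm{Fix}(1)|=d\geq 2$). Hence $C\subsetneq G$, and $a:=|C|/|G|\leq 1-|G|^{-1}<1$.

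The last step is to apply the Chebotarev density theorem for the cover $\widetilde V\to V_0$ over $\Fp_p$ --- which, just as in the point counting recalled above for the affine linear sieve (or see \cite{cup}), follows from the Lang--Weil estimates applied to the intermediate covers --- yielding
$$
\#\{x\in V_0(\Fp_p)\ :\ \mathrm{Frob}_x\subset C\}=a\,\#V_0(\Fp_p)+O(p^{r^2-3/2}),
$$
with implied constant depending only on $W$, $\pi$ and $r$. Adding the $O(p^{r^2-2})$ points outside $V_0$ and dividing by $\#\SL_r(\Fp_p)=p^{r^2-1}(1+O(p^{-1}))$, one gets $|\pi(W(\Fp_p))|\,/\,|\SL_r(\Fp_p)|\leq a+O(p^{-1/2})$, so any fixed $\delta\in(a,1)$ works once $p$ is large enough. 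I expect the main difficulty to be making this Chebotarev/Lang--Weil step precise and uniform in $p$: one must check that $\widetilde V\to V_0$ has good reduction at all large $p$, with geometric complexity bounded independently of $p$, so that the error term really is $O(p^{r^2-3/2})$. (One caveat worth recording: if the Galois closure has a field of constants strictly larger than $\Qq$ --- as for the Kummer cover $t^d=f(g)$, where $\gcd(d,p-1)=1$ forces every fibre to contain an $\Fp_p$-point --- then $\mathrm{Frob}_x$ ranges only over a coset of the geometric monodromy group determined by $p$, and the stated bound is available only for the positive density of primes $p$ that split completely in that field; this is harmless for Proposition~\ref{pr-not-thin}, whose proof only requires a positive proportion of genuinely restrictive primes.)
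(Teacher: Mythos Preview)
The paper does not give its own proof of this lemma: it simply refers the reader to Serre~\cite[Th.~3.6.2]{serre-galois}. Your argument is precisely the one underlying Serre's treatment --- reduce to thin sets of type~I (image in a proper subvariety, handled by Lang--Weil) and of type~II (dominant of degree $d\geq 2$), then for type~II use the monodromy/Chebotarev description of which fibres acquire a rational point, together with Jordan's lemma guaranteeing a fixed-point-free element in the monodromy group. So your approach coincides with the paper's cited source.

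Your final caveat is more than a technicality: it is a genuine correction to the lemma as stated. When the Galois closure of $\Qq(W)/\Qq(\SL_r)$ has a nontrivial field of constants $L/\Qq$, the Frobenius at $p$ ranges only over the coset of the geometric monodromy group determined by the Frobenius of $p$ in $\Gal(L/\Qq)$, and for primes in certain residue classes that coset may consist entirely of elements with fixed points. Your cube example $x\mapsto x^3$ (with $L=\Qq(\zeta_3)$) shows that for such primes the image really can be all of the target, so the conclusion cannot hold ``for $p$ large enough'' but only for a set of primes of positive density --- which is exactly what Serre's Theorem~3.6.2 actually asserts. As you note, this is harmless for Proposition~\ref{pr-not-thin}, since the large sieve (see the remark following Theorem~\ref{th-ls}) only needs~\eqref{eq-large-hyp} on a positive-density set of primes.
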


For the proof, see e.g.~\cite[Th. 3.6.2]{serre-galois}. 
\end{example}

\begin{example}[Homology of Dunfield--Thurston random manifolds]\label{ex-h1}
  We consider the situation of Example~\ref{ex-first}, (3). Here we
  found sifting conditions $\Omega_p$ defined in~(\ref{eq-omega-h1})
  such that, if $\mathcal{M}_{\phi_n}$ denotes the manifold obtained
  from the $n$-th step of a random walk on the mapping class group
  $\Gamma$ (as in Example~\ref{ex-dt-1}), we have
$$
\proba(H_1(\mathcal{M}_{\phi_n},\Qq)\not=0)\leq
\proba(\phi_n\in \sifted(Q,\Omega_p))
$$
for any $Q\geq 1$, where $Q$ refers to using all primes $p\leq Q$ as
sieve conditions. It is an interesting computation to show that
$$
\frac{|\Omega_p|}{|\Gamma_p|}= 1-\prod_{j=1}^g{ \frac{1}{1+p^{-j}}}
$$
(see~\cite[Th. 8.4]{dunfield-thurston}) so that, for fixed $g$, there
exists $\delta_g>0$ for which
$$
\frac{|\Omega_p|}{|\Gamma_p|}\geq\delta_g
$$
for all $p$.
\end{example}

We now revert to the general setting of a discrete group $\Gamma$ with
local information $\pi_p\,:\, \Gamma\lra \Gamma_p$. We have found
above two natural instances of \emph{large sieves}, a terminology
which refers to sieving problems where the sets $\Omega_p$ are
``large'', something which most commonly means that they contain a
positive proportion of $\Gamma_p$: for some $\delta>0$, we have
\begin{equation}\label{eq-large}
\frac{|\Omega_p|}{|\Gamma_p|}\geq \delta>0
\end{equation}
for all $p\in \mathcal{P}$. This is to be compared with the ``small''
sieve assumption~(\ref{eq-dimension}), and this leads to an
interesting remark (answering the question to the reader at the end of
Example~\ref{ex-first}, (3)): the primes occur explicitly on both
sides of~(\ref{eq-dimension}), but as far as the left-hand side is
concerned, they are just indices that could be replaced with any other
countable set. However, on the right-hand side, the actual size of
primes (and hence their distribution) is involved. This feature
disappears in~(\ref{eq-large}). This suggests that the ``large'' sieve
could be of interest in much wider contexts outside of number
theory. This is indeed the case, as was shown already partly in the
book~\cite{cup}, and even more convincingly in the recent works of
Lubotzky, Meiri and Rosenzweig that we will discuss, some of which
prove general algebraic statements about linear groups using some
forms of sieve methods.
\par
To present the large sieve in the context of discrete groups, we will
use here the very simple version from the
paper~\cite{lubotzky-meiri-2} of Lubotzky and Meiri, adapted to our
setting.

\begin{theorem}[Large sieve]\label{th-ls}
  Let $\Gamma$ be a group generated by a finite symmetric set $S$ with
  $1\in S$. Let $\Gamma\lra \Gamma_p$ be surjective homomorphisms onto
  finite groups for $p\geq p_0$. Assume that:
\par
\emph{(1)} For any $p\not=q$ primes $\geq p_0$, the induced
homomorphisms
\begin{equation}\label{eq-indep-pair}
\Gamma\lra \Gamma_p\times \Gamma_q=\Gamma_{p,q}
\end{equation}
are onto.
\par
\emph{(2)} The family of Cayley graphs of $\Gamma_{p,q}$ and
$\Gamma_p$ with respect to $S$ is an expander family, for $p, q\geq
p_0$. 
\par
\emph{(3)} For some $B\geq 1$ we have
$$
|\Gamma_p|\leq p^B.
$$
\par
Let $\Omega_p\subset \Gamma_p$ be such that 
\begin{equation}\label{eq-large-hyp}
\frac{|\Omega_p|}{|\Gamma_p|}\geq \delta
\end{equation}
for some $\delta>0$ independent of $p$.
\par
Then there exists $A>1$ and $c>1$ such that for $Q=A^n$, we have
$$
\proba(\gamma_n\in \sifted(Q;\Omega))\ll c^{-n}
$$
for $n$ large enough, where the sieving is done using primes $p_0\leq
p\leq Q$.
\end{theorem}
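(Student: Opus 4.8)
The plan is to run the standard dual (second-moment) form of the large sieve, using the uniform equidistribution~\eqref{eq-quant-equid} supplied by the Expansion hypothesis to control the off-diagonal covariances. Write $\mathcal{P}=\{p\text{ prime}:p_0\leq p\leq Q\}$ and $\delta_p=|\Omega_p|/|\Gamma_p|$, and introduce the random variable
$$
F=\sum_{p\in\mathcal{P}}\bigl(\charfun_{\Omega_p}(\pi_p(\gamma_n))-\delta_p\bigr).
$$
The key observation is that if $\gamma_n\in\sifted(Q;\Omega)$ then $\charfun_{\Omega_p}(\pi_p(\gamma_n))=0$ for every $p\in\mathcal{P}$, so $F=-\sum_{p\in\mathcal{P}}\delta_p\leq-\delta|\mathcal{P}|$ by~\eqref{eq-large-hyp}; hence $\charfun_{\gamma_n\in\sifted(Q;\Omega)}\leq F^2/(\delta^2|\mathcal{P}|^2)$ pointwise, and taking expectations
$$
\proba(\gamma_n\in\sifted(Q;\Omega))\leq\frac{\expect(F^2)}{\delta^2|\mathcal{P}|^2}.
$$
So the whole problem reduces to an upper bound for $\expect(F^2)$, which upon expanding the square is a sum of $|\mathcal{P}|^2$ covariance terms.

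Next I would estimate these covariances. The $|\mathcal{P}|$ diagonal terms $\expect\bigl((\charfun_{\Omega_p}(\pi_p(\gamma_n))-\delta_p)^2\bigr)$ each lie in $[0,1]$, contributing $O(|\mathcal{P}|)$. For an off-diagonal pair $p\neq q$, assumptions (1) and (2) let me apply~\eqref{eq-quant-equid} with $I=\{p,q\}$ — where $|\Gamma_I|=|\Gamma_p|\,|\Gamma_q|\leq(pq)^B\leq Q^{2B}$ by (1) and (3), and the spectral radius $\rho_I\leq\rho<1$ uniformly by (2) — and likewise with $I=\{p\}$, obtaining $\expect\bigl(\charfun_{\Omega_p}(\pi_p(\gamma_n))\charfun_{\Omega_q}(\pi_q(\gamma_n))\bigr)=\delta_p\delta_q+O(Q^{2B}\rho^n)$ and $\expect\bigl(\charfun_{\Omega_p}(\pi_p(\gamma_n))\bigr)=\delta_p+O(Q^{2B}\rho^n)$. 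Feeding these into the covariance, the main terms cancel and each off-diagonal term is $O(Q^{2B}\rho^n)$; since $|\mathcal{P}|\leq Q$ this gives $\expect(F^2)\ll Q+Q^{2B+2}\rho^n$.

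To finish, I would use that $|\mathcal{P}|\gg Q/\log Q$ for $n$ large (prime number theorem, recalling $Q=A^n\to\infty$), so that
$$
\proba(\gamma_n\in\sifted(Q;\Omega))\ll_{\delta}\frac{(\log Q)^2}{Q^2}\bigl(Q+Q^{2B+2}\rho^n\bigr)=n^2(\log A)^2\bigl(A^{-n}+(A^{2B}\rho)^n\bigr),
$$
and then choose the parameter $A$: since $\rho<1$ one can take e.g. $A=\rho^{-1/(4B)}>1$, so that both $A^{-n}=\rho^{n/(4B)}$ and $(A^{2B}\rho)^n=\rho^{n/2}$ decay geometrically; writing $\eta<1$ for the larger base, $\proba(\gamma_n\in\sifted(Q;\Omega))\ll n^2\eta^n$, and any $c\in(1,\eta^{-1})$ works, since $n^2\eta^n=n^2(c\eta)^nc^{-n}\ll c^{-n}$.

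I expect the only genuine point — and the place where expansion is indispensable — to be the balancing of $A$ in the last step: $A$ must be $>1$ so that $Q=A^n$ is large enough to provide $\asymp Q/\log Q$ primes driving the main term $\asymp\delta^2|\mathcal{P}|^2$, yet small enough that $A^{2B}\rho<1$, so that the accumulated equidistribution error, of size $Q^{2B}\rho^n$ over the $O(Q^2)$ pairs, remains exponentially small. Such an $A$ exists exactly because the spectral radii $\rho_p$, $\rho_{p,q}$ are bounded away from $1$ \emph{uniformly in $p$, $q$}; with only $\rho_{p,q}\to1$ no single choice of $A$ would suffice. Everything else is routine large-sieve bookkeeping.
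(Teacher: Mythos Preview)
Your proof is correct and follows essentially the same second-moment/Chebychev strategy as the paper: introduce the counting variable, bound the sifted-set probability by a ratio of a second moment to a squared mean, and use the pairwise expansion hypothesis to kill the off-diagonal covariances. The only cosmetic difference is that the paper centers at the true mean $\expect(X)$ and invokes Chebychev's inequality in the form $\proba(X=0)\leq\variance(X)/\expect(X)^2$, whereas you center at the ideal mean $\sum_p\delta_p$ and use the pointwise bound $\charfun_{\sifted}\leq F^2/(\delta|\mathcal{P}|)^2$; the resulting estimates and the balancing of $A$ are the same.
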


Note how the assumptions concerning the group and the $\Gamma_p$ are
slightly weaker versions of those used for the small sieve in
Theorem~\ref{th-small}, since expansion and independence is only
required for pairs of primes instead of all squarefree integers. Thus
this version of the large sieve applies whenever
Theorem~\ref{th-small} is applicable. 
\par
In particular, in view of the example at the beginning of this
section, we see that this theorem proves
Proposition~\ref{pr-not-thin}. Similarly, for the Dunfield--Thurston
random manifolds of Example~\ref{ex-h1}, this implies the following:

\begin{proposition}
  Let $g\geq 1$ be an integer, and let $(\phi_n)$ be a random walk on
  the mapping class group $\Gamma$ of $\Sigma_g$ associated to a
  finite generating set $S$. Then there exists $c>1$ such that 
$$
\proba(H_1(\mathcal{M}_{\phi_n},\Qq)\not=0)\ll c^{-n}
$$
for $n\geq 1$. 
\end{proposition}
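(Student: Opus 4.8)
The plan is to obtain the estimate as a direct application of the large sieve, Theorem~\ref{th-ls}, to $\Gamma$ the mapping class group of $\Sigma_g$, the reduction maps $\pi_p\colon\Gamma\to\Gamma_p=\Sp_{2g}(\Fp_p)$ coming from the symplectic action of $\Gamma$ on $H_1(\Sigma_g,\Zz)\simeq\Zz^{2g}$, and the sets $\Omega_p$ of~(\ref{eq-omega-h1}). The bridge to homology is the elementary observation already recorded in Example~\ref{ex-h1}: if $H_1(\mathcal{M}_{\phi},\Qq)\neq 0$ then $H_1(\mathcal{M}_{\phi},\Zz)$ has positive free rank, whence $H_1(\mathcal{M}_{\phi},\Fp_p)\neq 0$ --- equivalently $\pi_p(\phi)\notin\Omega_p$ --- for \emph{every} prime $p$. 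Thus every such $\phi$ lies in $\sifted(Q;\Omega)$ for all $Q$ (and for every choice of sieve range), so it suffices to bound $\proba(\phi_n\in\sifted(Q;\Omega))$ for one well-chosen $Q=A^n$.

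First I would normalise the random walk: in the framework of Method~3 one takes $S=S^{-1}$, and by adjoining $1$ to $S$ if necessary (only making the walk lazy) one may also assume $1\in S$, the resulting exponential bound transferring back to the original walk after a harmless reparametrisation of time. Then I would check the hypotheses of Theorem~\ref{th-ls}. Hypothesis~(3) holds with $B=\dim\Sp_{2g}=g(2g+1)$, since $|\Sp_{2g}(\Fp_p)|\leq p^{g(2g+1)}$. Hypothesis~(1), independence for pairs of primes, follows from the surjectivity of $\phi\colon\Gamma\to\Sp_{2g}(\Zz)$ (Example~\ref{ex-first}, (3)) together with strong approximation for $\Sp_{2g}$, or concretely from the Chinese Remainder isomorphism $\Sp_{2g}(\Zz/pq\Zz)\simeq\Sp_{2g}(\Fp_p)\times\Sp_{2g}(\Fp_q)$: the composite $\Gamma\to\Gamma_{p,q}$ is onto for all $p\neq q$, so $p_0$ may be taken as large as convenient, and discarding finitely many small primes from the sieve is harmless (the inclusion in the first paragraph survives, since positivity of the rational Betti number forces $H_1(\mathcal{M}_\phi,\Fp_p)\neq 0$ for all $p$). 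Hypothesis~(2), the uniform expansion of the Cayley graphs of the $\Gamma_p$ and $\Gamma_{p,q}$ relative to $S$, is the one genuinely deep ingredient: as noted in Example~\ref{ex-dt-2} it follows from Kazhdan's Property~$(T)$ for $\Sp_{2g}(\Zz)$ when $g\geq 2$ (and from Selberg's Property~$(\tau)$ for $\SL_2(\Zz)$ when $g=1$), because each $\Gamma_p$ and $\Gamma_{p,q}$ is a quotient of a congruence quotient of $\Sp_{2g}(\Zz)$ on which $S$ maps onto a generating set (and $1\in S$ removes any eigenvalue $-1$).

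There remains the large-sieve hypothesis~(\ref{eq-large-hyp}), which is exactly the computation of Example~\ref{ex-h1}: by~\cite[Th. 8.4]{dunfield-thurston},
\[
\frac{|\Omega_p|}{|\Gamma_p|}=1-\prod_{j=1}^{g}\frac{1}{1+p^{-j}},
\]
and for fixed $g$ the right-hand side is bounded below by some $\delta_g>0$ for all $p$. With all hypotheses verified, Theorem~\ref{th-ls} supplies $A>1$ and $c>1$ such that, sieving over primes $p_0\leq p\leq Q$ with $Q=A^n$,
\[
\proba\bigl(\phi_n\in\sifted(Q;\Omega)\bigr)\ll c^{-n}.
\]
Combined with the inclusion of the first paragraph this yields $\proba\bigl(H_1(\mathcal{M}_{\phi_n},\Qq)\neq 0\bigr)\leq\proba\bigl(\phi_n\in\sifted(Q;\Omega)\bigr)\ll c^{-n}$; since the asserted bound is trivial for $n$ in any bounded range, a slight adjustment of $c$ makes it hold for all $n\geq 1$.

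The difficulty of the argument is concentrated entirely in hypothesis~(2) of Theorem~\ref{th-ls}, i.e.\ the uniform spectral gap for congruence quotients of $\Sp_{2g}(\Zz)$ --- a known consequence of Property~$(T)$, respectively $(\tau)$, that is invoked as a black box. Everything else --- the structural hypotheses~(1) and~(3), the density lower bound~(\ref{eq-large-hyp}) via the Dunfield--Thurston count, and the passage from $\sifted(Q;\Omega)$ back to the event $H_1(\mathcal{M}_{\phi_n},\Qq)\neq 0$ --- is routine bookkeeping layered on top of Theorem~\ref{th-ls} and Example~\ref{ex-h1}.
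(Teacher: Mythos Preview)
Your proposal is correct and follows precisely the route the paper indicates: the Proposition is stated immediately after Theorem~\ref{th-ls} as a direct consequence of that large sieve applied to the setup of Example~\ref{ex-h1}, and you have simply written out the verification of hypotheses (1)--(3) and~(\ref{eq-large-hyp}) that the paper leaves implicit. Your care in distinguishing Property~$(T)$ for $g\geq 2$ from Property~$(\tau)$ for $g=1$ is in fact slightly more precise than the paper's own remark in Example~\ref{ex-dt-2}.
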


The fact that the probability tends to zero was already proved by
Dunfield and Thurston (see~\cite[Cor. 8.5]{dunfield-thurston}), and
the exponential decay was obtained in~\cite[Prop. 7.19]{cup}.

\begin{remark}
It would be unreasonable to expect lower-bounds for the size of
sifted sets in the large sieve situation, unless the set $\mathcal{P}$
determining the sieving conditions is extremely small (so the
situation essentially reverts to a bounded
sieve~(\ref{eq-bounded-sieve})). Indeed, if we consider integers and
sieve by removing the non-square residue classes modulo $p$ for all
$p\in\mathcal{P}$, which is certainly a large sieve, the right-hand
side of the heuristic size of the remaining set is
$(1/2)^{|\mathcal{P}|}$. If $\mathcal{P}$ is the set of primes $\leq
Q$, then this is much smaller than the number of squares in
$\{1,\ldots, N\}$, which certainly remain after the sieve, if
$Q=N^{\eps}$ for any fixed $\eps>0$. (See~\cite{green} for a
discussion of the fascinating question of the possibility of an
``inverse'' large sieve statement for integers.)
\end{remark}

We adapt the simple proof of Theorem~\ref{th-ls}
in~\cite{lubotzky-meiri-2} (due to R. Peled; it is reminiscent of some
classical arguments going back to Rényi and Tur\'an,
see~\cite[Prop. 2.15]{cup}.)

\begin{proof}
  For a fixed $n$, let $X_p$ denote the Bernoulli random variable
  equal to $1$ when $\pi_p(\gamma_n)\in \Omega_p$ and $0$ otherwise,
  and let
$$
X=\sum_{p_0\leq p\leq Q} X_p.
$$
\par
We see that $\gamma_n\in \sifted(Q;\Omega)$ is tautologically
equivalent to the condition $X=0$. We can compute easily the
expectation of $X$, namely
$$
\expect(X)=\sum_p \proba(\pi_p(\gamma_n)\in \Omega_p),
$$
which, by Expansion for $(\Gamma_p)$, satisfies
$$
\expect(X)=\sum_p\frac{|\Omega_p|}{|\Gamma_p|}
+O(Q^{1+B}\rho^n),
$$
where $\rho<1$ is an upper-bound for the spectral radius of the
expansion of the Cayley graphs. If $Q^{1+B}\ll \rho^n$, this gives
$$
\expect(X)\gg \pi(Q)\gg \frac{Q}{\log Q}
$$
using the large sieve assumption on the size of $\Omega_p$.
\par
We will now use the Chebychev inequality
$$
\proba(\gamma_n\in\sifted(Q;\Omega))=\proba(X=0)\leq 
\frac{\variance(X)}{\expect(X)^2},
$$
where $\variance(X)$ is the variance of $X$. We compute
$$
\variance(X)=\expect((X-\expect(X))^2)=\sum_{p,q} W(p,q)
$$
by expanding the square, where
\begin{align*}
W(p,q)&=\expect(X_pX_q)-\expect(X_p)\expect(X_q)
\\&=
\proba(\pi_p(\gamma_n)\in \Omega_p\text{ and }
\pi_q(\gamma_n)\in \Omega_q)-\proba(\pi_p(\gamma_n)\in
\Omega_p)\proba(\pi_q(\gamma_n)\in \Omega_q)
\end{align*}
(a measure of the correlation between two primes). We isolate the
diagonal terms where $p=q$, for which we use the trivial bound
$|W(p,p)|\leq 1$, and obtain
$$
\variance(X)\leq Q+Q^2\max_{p\not=q} |W(p,q)|.
$$
\par
Finally, to estimate $W(p,q)$ when $p\not=q$, we can apply the
assumption~(\ref{eq-indep-pair}) and the expansion of the Cayley
graphs of $\Gamma_{p,q}$ and $\Gamma_p$: we have
$$
\proba(\pi_p(\gamma_n)\in \Omega_p\text{ and }
\pi_q(\gamma_n)\in
\Omega_q)=\frac{|\Omega_p||\Omega_q|}{|\Gamma_p||\Gamma_q|}
+O(Q^{2B}\rho^n)
$$
while, by the same argument used for computing the expectation, we
have
$$
\proba(\pi_p(\gamma_n)\in \Omega_p)\proba(\pi_q(\gamma_n)\in
\Omega_q)= \frac{|\Omega_p||\Omega_q|}{|\Gamma_p||\Gamma_q|}
+O(Q^B\rho^n).
$$
\par
The main terms cancel, and therefore
$$
Q^2\max_{p\not=q} |W(p,q)|\ll Q^{2+2B}\rho^n.
$$
\par
Take $Q$ as large as possible so that $Q^{2+2B}\rho^n<1$, so that
$Q\geq A^n$ for some $A>1$. Then the Chebychev inequality gives
$$
\proba(\gamma_n\in\sifted(Q;\Omega))
\ll \frac{(Q+Q^{2+2B}\rho^n)(\log Q)^2}{Q^2}
\ll \frac{(\log Q)^2}{Q}
$$
which is of exponential decay in terms of $n$.
\end{proof}

\begin{remark}
  (1) Clearly, one can restrict the large sieve
  assumption~(\ref{eq-large-hyp}) to a subset of primes with positive
  natural density (e.g., some arithmetic progression) without changing
  the conclusion, and this is often useful.
\par
(2)   This very simple proof is well-suited to situations where precise
  information on the expansion constant of the relevant Cayley graphs
  is missing (as is most often the case). When such information is
  available, one gets from this argument an explicit constant $c>1$,
  and one may wish to get it as large as possible. For this, one can
  use rather more precise inequalities, as discussed extensively
  in~\cite{cup}.
\end{remark}

The point of the large sieve is really the exponential decay it
provides. If one is interested in a statement of qualitative decay
\begin{equation}\label{eq-qual-large}
\lim_{n\ra +\infty} \proba(\gamma_n\in X)=0
\end{equation}
for a subset $X\subset \Gamma$ such that
$$
X\subset \sifted(Q;\Omega)
$$
for \emph{all} $Q$ large enough, where the $\Omega_p$
satisfy~(\ref{eq-large-hyp}), then one can more easily apply the
bounded sieve~(\ref{eq-bounded-sieve}) to a finite set $I$, getting
$$
\limsup_{n\ra +\infty} \proba(\gamma_n\in X)\leq 
\lim_{n\ra +\infty} \proba(\pi_p(\gamma_n)\notin \Omega_p\text{ for }
p\in I)=
\prod_{p\in I}\Bigl(1-\frac{|\Omega_p|}{|\Gamma_p|}\Bigr)
\leq (1-\delta)^{|I|}.
$$
\par
Then, letting $|I|\ra +\infty$, we obtain~(\ref{eq-qual-large}). As an
example, note that this qualitative decay is \emph{not} sufficient to
prove Proposition~\ref{pr-not-thin}.
\par
Lubotzky and Meiri introduce the following convenient definition:

\begin{definition}[Exponentially small sets]
Let $\Gamma$ be a finitely generated group. A subset $X\subset \Gamma$
is \emph{exponentially small} if, for any finite symmetric generating
set $S$ containing $1$, and with $(\gamma_n)$ the corresponding random
walk on $\Gamma$, there exists a constant $c_S>1$ such that
$$
\proba(\gamma_n\in X)\ll c_S^{-n}
$$
for $n\geq 1$.
\end{definition}

\begin{remark}
  Thus, we can summarize part of our previous discussion by stating
  that if $X$ is a thin subset of $\SL_r(\Qq)$, and $\Gamma$ is a
  finitely generated Zariski-dense subgroup of $\SL_r(\Zz)$, then
  $X\cap \Gamma$ is exponentially small in $\Gamma$, and by saying
  that the set of mapping classes (in a fixed mapping class group
  $\Gamma$ of genus $g\geq 1$) for which the corresponding manifold
  obtained by Heegaard splitting has positive first rational Betti
  number is exponentially small.
\end{remark}

The first inkling of the large sieve in non-abelian discrete groups is
found in applications of the qualitative argument above by
Dunfield--Thurston~\cite{dunfield-thurston} and Rivin~\cite{rivin,
  rivin2} in geometric contexts (the second paper~\cite{rivin2} of
Rivin was the first to obtain exponential decay, though its
publication was delayed by a journal with overly long backlog; we
thank I. Rivin for clarifying the priority in publication here). We
illustrate further the large sieve with an example from the second,
and then discuss briefly two other applications.

\begin{example}[Pseudo-Anosov elements of the mapping class group]
  Let $g\geq 1$ be given and let $\Gamma$ be the mapping class group
  of $\Sigma_g$. Thurston's celebrated theory classifies the elements
  $\gamma\in \Gamma$ as (1) reducible; (2) finite-order; or (3)
  pseudo-Anosov. To quantify the feeling that ``most'' elements are of
  the third type, Rivin used a criterion based on the action of
  $\gamma$ on $H_1(\Sigma_g,\Zz)$, which says that \emph{if} (but not
  only if) the characteristic polynomial $P_{\gamma}$ of this action
  is $P_{\gamma}$ is irreducible, and satisfies further easy
  conditions, then $\gamma$ is pseudo-Anosov. One then notes that if
  $P_{\gamma}$ is reducible, then so is its reduction modulo any
  prime, so $\pi_p(\gamma)$ is not in the subset $\Omega_p$ of
  elements of $\Sp_{2g}(\Fp_p)$ for which the characteristic
  polynomial is irreducible. A computation that goes back to
  Chavdarov~\cite[\S 3]{chavdarov} shows that, for some $\delta>0$, we
  have
$$
\frac{|\Omega_p|}{|\Sp_{2g}(\Fp_p)|}\geq \delta>0
$$
for all $p$, and hence the large sieve applies. A simple further
argument deals with the other necessary conditions in the
pseudo-Anosov criterion, and one concludes that the set of
non-pseudo-Anosov elements is exponentially small in $\Gamma$. 
\par
It should be said, however, that this proof is to some extent
unsatisfactory, because it doesn't use the deeper structural and
dynamical properties of pseudo-Anosov elements. For instance, using
the action on homology means that one cannot argue similarly for
subgroups $\tilde{\Gamma}\subset \Gamma$ for which the action on
homology is small, especially subgroups of the Torelli group, which is
defined precisely as the kernel of the homomorphism
$$
\Gamma\lra \Sp_{2g}(\Zz)
$$
giving this action.
\par
However, Maher~\cite{maher-3, maher2} has shown, using more geometric
methods, that non-pseudo-Anosov elements are exponentially small in
any subgroup of $\Gamma$, except those for which this property is
false for obvious reasons, and his work applies in particular to the
Torelli subgroup.
\par
On the other hand, Lubotzky--Meiri~\cite{lubotzky-meiri} and
Malestein--Souto~\cite{malestein-souto} (independently) have recently
found proofs that non-pseudo-Anosov elements are exponentially small
in the Torelli group using ideas similar to those above.
\end{example}

\begin{example}[Powers in linear groups]
In~\cite{lubotzky-meiri-2}, Lubotzky and Meiri prove the following
statement using the large sieve. The reader should note that this is,
on the face of it, a purely  algebraic property  of finitely generated
linear groups.

\begin{theorem}[Lubotzky--Meiri]
  Let $\Gamma$ be a finitely generated subgroup of $\GL_r(\Cc)$ for
  some $r\geq 2$. If $\Gamma$ is not virtually solvable,\footnote{\
    I.e., there is no finite-index solvable subgroup of $\Gamma$.}
  then the set $X$ of proper powers, i.e., the set of those
  $g\in\Gamma$ such that there exists $k\geq 2$ and $h\in \Gamma$ with
  $g=h^k$, is exponentially small in $\Gamma$.
\end{theorem}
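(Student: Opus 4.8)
The plan is to apply the large sieve (Theorem~\ref{th-ls}) to the group $\Gamma$, so the main work is twofold: first reducing the statement to a group which is linear over a number field (or over $\Zz$ after clearing denominators) and to which Theorem~\ref{th-expansion} applies, and second exhibiting, for a positive-density set of primes $p$, a reduction $\Gamma_p$ together with sets $\Omega_p\subset\Gamma_p$ containing all reductions of proper powers and satisfying the largeness bound~\eqref{eq-large-hyp}. For the first point, since $\Gamma\subset\GL_r(\Cc)$ is finitely generated, all matrix entries of the generators lie in a finitely generated subring of $\Cc$, hence in the ring of $S_0$-integers $\Oc$ of some number field $K$; after a suitable specialization (a place of good reduction, avoiding a finite bad set), one obtains homomorphisms $\pi_{\mathfrak p}\colon\Gamma\to\GL_r(k(\mathfrak p))$ for primes $\mathfrak p$ of $K$, with image $\Gamma_{\mathfrak p}$ a subgroup of $\GL_r$ over a finite field of size $\asymp p^{f}$, so condition~(3) of Theorem~\ref{th-ls} holds. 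Here one must use that $\Gamma$ not virtually solvable forces, by the Tits alternative, that $\Gamma$ contains a free group of rank $2$, and more importantly that the Zariski closure $\Gg$ of $\Gamma$ has a non-solvable (hence, after passing to a finite-index subgroup and restricting attention to the semisimple part, perfect) connected component; this is what feeds Theorem~\ref{th-expansion} and delivers the Independence and Expansion hypotheses (1)--(2) of Theorem~\ref{th-ls} for a positive-density set of primes. One technical wrinkle: Theorem~\ref{th-expansion} is stated over $\Zz$, so I would either invoke the $\Oc$-version (due to the same authors) or first project to a non-trivial almost-simple quotient of $\Gg^\circ$ and work there.

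The second and genuinely sieve-theoretic point is the \emph{largeness} of $\Omega_p$. The natural choice is
$$
\Omega_p=\{g\in\Gamma_p\,\mid\, g\text{ is not a $k$-th power in }\Gamma_p\text{ for any }2\le k\le K_p\},
$$
or rather its complement is the set of proper powers in $\Gamma_p$; we need the set of proper powers to occupy a bounded-below proportion of $\Gamma_p$ to be \emph{small}, i.e.\ we need $|\Omega_p|/|\Gamma_p|\ge\delta$. The key observation is that a proper power $g=h^k$ has the property that its image in $\Gamma_{\mathfrak p}$ is a $k$-th power; if the exponent $k$ were allowed to grow with $\|g\|$ this would be useless, but here one uses that if $g=h^k$ with $h\in\Gamma$ then $k$ is bounded in terms of the word length of $g$ (because the order of a non-torsion element's image grows, or more simply because the eigenvalues of a proper $k$-th power of a matrix of bounded height cannot have $k$ too large relative to that height — this is where linearity over a number field is used), so it suffices to control, for each fixed small $k$, the proportion of $k$-th powers in $\Gamma_{\mathfrak p}$. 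For a finite simple group of Lie type $G(\Fp_p)$ (or a bounded product of such, which is what $\Gamma_{\mathfrak p}$ essentially is up to bounded-index solvable pieces), the proportion of squares — and more generally of $k$-th powers for fixed $k$ — is bounded away from $1$ uniformly in $p$: this can be seen on a maximal torus, where the $k$-th power map on $(\Fp_p^\times)^{\mathrm{rk}}$ has image of index $\gcd(k,p-1)^{\mathrm{rk}}$-ish, which is a positive proportion once one averages suitably, or one can cite Chavdarov-type / Jordan-correspondence arguments as in the pseudo-Anosov example above. Summing a geometric-type bound over $2\le k\le K_p$ (with $K_p$ of logarithmic size, say $K_p\ll\log p$, or even a constant after an initial reduction) keeps the total proportion of proper powers bounded away from $1$, giving $\delta>0$.

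With these two ingredients in place the conclusion is immediate: any proper power $g\in\Gamma$ of word length $\le n$ satisfies $\pi_p(g)\notin\Omega_p$ for all primes $p$ up to some $Q=A^n$ outside a finite bad set, hence $g\in\sifted(Q;\Omega)$, and Theorem~\ref{th-ls} gives $\proba(\gamma_n\in\sifted(Q;\Omega))\ll c^{-n}$; since the generating set $S$ was arbitrary this says exactly that $X$ is exponentially small in the sense of the definition above. The main obstacle I anticipate is the \emph{uniformity} in $p$ of the largeness bound together with the handling of the growing range of exponents $k$: one must be careful that the constant $\delta$ does not degrade as $k\to K_p$, and the cleanest route is probably to bound the exponent $k$ by an absolute constant after a preliminary argument (e.g.\ an element that is a $k$-th power for arbitrarily large or unbounded $k$ is, generically, a power of something with small eigenvalue spread, and the set of such is itself thin, hence exponentially small by Proposition-type arguments already in the excerpt), thereby reducing to finitely many values of $k$ and a clean application of "positive proportion of non-$k$-th-powers in groups of Lie type". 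A secondary but real subtlety is identifying the correct $\Gamma_{\mathfrak p}$: one wants it to be (close to) a quasisimple group of Lie type so that the $k$-th power count is governed by torus combinatorics, and this is precisely what strong approximation plus the Salehi-Golsefidy--Varjú expansion theorem arrange, provided one has first quotiented out the solvable radical of $\Gg^\circ$, which is harmless since $\Gamma$ is not virtually solvable.
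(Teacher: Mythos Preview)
Your overall architecture---reduce to a number-field/integral setting, invoke Salehi-Golsefidy--Varj\'u for expansion, and feed the large sieve with ``non-$k$-th-power'' conditions---matches the paper's description of the Lubotzky--Meiri argument. The paper does not give a full proof; it only sketches the structure, and on that level you have it right.

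The gap is precisely where you say you anticipate one: the union over~$k$. The paper is explicit that ``taking the union over all $k\geq 2$ \emph{cannot} be done with sieve alone''. Your attempt to absorb all $2\le k\le K_p$ into a single sieving set $\Omega_p$ will not produce a uniform $\delta>0$: for $k$ coprime to $|\Gamma_p|$ every element is a $k$-th power, and even restricting to $p\equiv 1\pmod k$ the density of usable primes decays with~$k$, so the sieve constant degrades. Your alternative---bound $k$ in terms of the word length of $g$ via eigenvalues---is the right instinct, and is in fact the ``other tool'' the paper points to: the Lubotzky--Mozes--Raghunathan comparison of archimedean and word metrics. But your formulation glosses over the hard case. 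If $g=h^k$ and $h$ has an eigenvalue of modulus $>1+\eps$, then indeed $k\ll_\eps \log\|g\|\ll_\eps n$; however you must separately dispose of $h$ with all eigenvalues on (or near) the unit circle, and this is not a thinness statement in the sense of Serre that Proposition~\ref{pr-not-thin} handles. Making that dichotomy rigorous, and getting a \emph{uniform} exponential bound over the resulting range of~$k$, is the substantive content you are missing; the paper defers it to~\cite{lubotzky-meiri-2} and the LMR-type input~\cite{lmr}.
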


This strenghtens considerably some earlier work of a Hrushovski,
Kropholler, Lubotzky and Shalev~\cite{hkls}. The proof is also very
instructive, in particular by showing how sieve should be considered
as a \emph{tool} among others: here, one can use the large sieve to
control elements which are $k$-th powers for a \emph{fixed} $k\geq 2$,
but taking the union over all $k\geq 2$ cannot be done with sieve
alone. So Lubotzky and Meiri use other tools to deal with large values
of $k$, in that case based on ideas related to the work of Lubotzky,
Mozes and Raghunathan comparing archimedean and word-length
metrics~\cite{lmr}.

\end{example}

\begin{example}[Typical Galois groups of characteristic polynomials]
  Our last example has been studied by Rivin~\cite{rivin},
  Jouve--Kowalski--Zywina~\cite{jkz},
  Gorodnik--Nevo~\cite{gorodnik-nevo} and most recently
  Lubotzky--Rosenzweig~\cite{lubotzky-rosenzweig}, who were the first
  to explicitly consider the case of sparse subgroups. However, the
  underlying idea of probabilistic Galois theory goes back to versions
  of Hilbert's irreducibility theorem, and especially to Gallagher's
  introduction of the large sieve in this
  context~\cite{gallagher-2}. (There are also relations with works of
  Prasad and Rapinchuk~\cite{pr, pr2}.)
\par
In the (most general) version of Lubotzky--Rosenzweig, one considers a
finitely generated field $K\subset \Cc$ and a finitely generated
subgroup $\Gamma\subset \GL_r(K)$ for some $r\geq 2$. The basic
question is: what is the ``typical'' behavior of the splitting field
of the characteristic polynomial $\det(T-g)\in K[T]$ for some element
$g\in \Gamma$?
\par
This can be studied using the large sieve, as we explain in the
simplest case when $\Gamma\subset \SL_r(\Zz)$. Let $\Gg$ be the
Zariski-closure of $\Gamma$, and assume that $\Gg$ is connected and
split over $\Qq$, for instance $\Gg=\SL_r$. Let $W$ be the Weyl group
of $\Gg$: this will turn out to be the typical Galois group in this
case.
\par
To see this, the first ingredient is the existence, for any prime $p$
large enough (such that $\Gg$ can be reduced modulo $p$), of a certain
map
$$
\varphi\,:\, \Gg(\Fp_p)_{r}^{\sharp}\lra W^{\sharp}
$$
going back to Carter and Steinberg, where $G^{\sharp}$ denotes the set
of conjugacy classes of a finite group and the subscript $r$ restricts
to regular semisimple elements in the finite group $\Gg(\Fp_p)$. 
\par
This map is used to detect elements in the Galois groups of elements
in $\Gamma$ in the following way. First, for $g\in \SL_r(\Zz)$, let
$P_g$ be the characteristic polynomial and let $K_g$ be its splitting
field, $\Gal_g$ its Galois group over $\Qq$.  The point is that, if
$g$ is a regular semisimple element of $\Gamma$, it is shown
in~\cite{jkz} that there exists an injective homomorphism
$$
j_g\,:\, \Gal_g\injecte W,
$$
canonical up to conjugation, such that if $p$ is any prime unramified
in $K_g$, the Frobenius conjugacy class at $p$ maps under $j_g$ to the
conjugacy class $\varphi(\pi_p(g))\in W^{\sharp}$. Thus one can detect
whether the image of $\Gal_g$ in $W$ intersects various conjugacy
classes by seeing where the reduction modulo $p$ of $g$ lies with
respect to $\varphi$. As it turns out, the image of $\varphi$ becomes
equidistributed among the conjugacy classes in $W$ as $p$ becomes
large. Using this, it is not too hard to show that if $\alpha\in
W^{\sharp}$ is a given conjugacy class and if $\Omega_p$ denotes the
set of $g\in\Gg(\Fp_p)$ such that $\varphi(g)\notin \alpha$, then
these sets satisfy a large sieve density assumption
$$
\frac{|\Omega_g|}{|\Gg(\Fp_p)|}\geq \delta_{\alpha}>0
$$
for some $\delta_{\alpha}>0$ and all $p$ large enough. It follows by
the large sieve that the probability that the element $\gamma_n$ at
the $n$-th step of a random walk on $\Gamma$ has Galois group such
that $j_g(\Gal_g)\cap \alpha=\emptyset$ is exponentially small. This
holds for all the finitely many classes in $W$, and a well-known lemma
of Jordan\footnote{\ In a finite group $G$, there is no proper
  subgroup $H$ such that $H\cap \alpha\not=\emptyset$ for all
  conjugacy classes $\alpha$ in $G$.} allows us to conclude that the
set of $g\in \Gamma$ where $j_g$ is not onto is exponentially small.
\par
The general case treated by Lubotzky--Rosenzweig is quite a bit more
involved. In particular, new phenomena appear when $\Gg$ is \emph{not}
connected, and the different cosets of the connected component of the
identity then usually have different typical Galois groups. We refer
to their paper for details.
\end{example}

\section{Problems and questions}\label{sec-pbs}

We discuss here a few questions and problems, selected to a large
extent according to the author's own interests and bias.

\begin{enumerate}
\item \ [Effective results] A striking aspect of the results we have
  described is how little they use the many refinements and
  developments of sieve theory, as described in~\cite{FI} for small
  sieves, and in~\cite{cup} for the large sieve. This is due to the
  almost complete absence of explicit forms of the Expansion
  Assumption for sparse groups, from which it follows that one can
  not, for instance, give a numerical value of the integer $\omega$
  guaranteed to exist in Corollary~\ref{cor-zariski-dense} (recall
  that in classical sieves, the current state of the art is very
  refined indeed: one knows, for instance, that the number of primes
  $p\leq x$ such that $p+2$ has at most two prime factors is of the
  right order of magnitude). In fact, when implementing the
  combinatorial counting methods (either word-length or random walks),
  there is \emph{no} known explicit sieve statement, as far as the
  author knows\footnote{\ The remarkable results of Bourgain and
    Kontorovich~\cite{bourgain-konto} are explicit, but not directly
    related to the sieve as we have considered here;
    see~\cite{konto-circle} for a survey in these Proceedings.}
  (whereas a few explicit bounds do exist for archimedean balls, based
  on spectral or ergodic methods, see, e.g., the works of
  Kontorovich~\cite{konto}, Kontorovich--Oh~\cite{k-o-2},
  Nevo--Sarnak~\cite{n-s}, Liu--Sarnak~\cite{liu-s}, and
  Gorodnik--Nevo~\cite{gn2}, or for random walks in a few arithmetic
  groups~\cite{cup}). It seems clear that the current proofs of
  expansion for sparse groups, although they are effective, would lead
  to dreadful bounds on a suitable $\omega$ (see~\cite{explicit} for a
  numerical upper-bound on the spectral radius for Cayley graphs of
  Zariski-dense subgroups of $\SL_2(\Zz)$ modulo primes, which
  suggests, e.g., that one could not get better than $\omega$ of size
  at least $2^{2^{40}}$ or so for the product of coordinates function
  on the Lubotzky group\ldots).
\item \ [Average expansion?]
One possibility suggested by the classical Bombieri--Vino\-gradov
Theorem is to attempt a proof of expansion ``on average'' for the
relevant Cayley graphs: for many applications, it would be sufficient
to prove estimates for quantities like
$$
\sum_{q_I\leq Q}\max_{(g_p)\in\Gamma_I} \Bigl|
\mu_N(\pi_p(g)=g_p\text{ for } p\in I)-
\frac{1}{|\Gamma_I|}\Bigr|,
$$
and such estimates could conceivably be provable without resorting to
individual estimates for each $q_I$. They could also, optimistically,
be of better quality than what is true for individual $I$. (Such a
property is known for classical sieve, by work of Fouvry, Bombieri,
Friedlander and Iwaniec). 
\item \ [Combinatorial balls] It would be very interesting to have
  equidistribution and sieve results using trunctions based on
  word-length balls, without resorting to random walks. Here, the hope
  is that one might not need to compute the asymptotics of the size of
  the combinatorial balls, since one is only interested in relative
  proportions of elements in a ball mapping to a given
  $g\in\Gamma_p$. 
\item \ [Reverse power] This question is related to (1): at least in
  some cases, one has very convincing conjectures for the counting
  function of primes arising from small sieve in orbits (see,
  e.g.,~\cite{fuchs, f-s}). Suppose one assumes such conjectures. What
  does this imply for prime numbers? In other words, can one exploit
  information on primes represented using the sieve in orbits to
  derive other properties of prime numbers? Here the reference to keep
  in mind is the result of Gallagher (see~\cite{gallagher} and the
  generalization in~\cite{poisson}) that shows that uniform versions
  of the Hardy--Littlewood $k$-tuples conjecture imply that the number
  of primes $p\leq x$ in intervals of length $\lambda \log x$, for
  fixed $\lambda>0$, is asymptotically Poisson-distributed.
\end{enumerate}

\end{document}